\theoremstyle{plain}
\newtheorem{teo}{Theorem}
  \newtheorem{lem}[teo]{Lemma}
  \theoremstyle{remark}
  \newtheorem{rem}[teo]{Remark}
  \newtheorem{question}[teo]{Question}
\theoremstyle{definition}
  \theoremstyle{definition}
    \newtheorem{defi}[teo]{Definition}
  \newtheorem{ex}[teo]{Example}
  \theoremstyle{plain}
  \newtheorem{prop}[teo]{Proposition}
  \theoremstyle{plain}
  \newtheorem{coro}[teo]{Corollary}
  \newtheorem{conj}[teo]{Conjecture}
\def\t{\triangleleft}
\def\ot{\otimes}
\def\wh{\widehat}
\def\wt{\widetilde}
\def\Hom{\mathrm{Hom}}
\def\Ker{\mathrm{Ker}}
\def\Der{\mathrm{Der}}
\def\Sh{\mathrm{Sh}}
\def\gr{\mathrm{gr}}
\def\dd{\hbox{disdeg}}
\def\B{\mathfrak{B}}
\def\id{\mathrm{Id}}
\def\N{\mathbb{N}}
\def\K{\mathbb{K}}
\def\BB{\mathbb{B}}
\def\SS{\mathbb{S}}
\def\Z{\mathbb{Z}}
\def\Q{\mathbb{Q}}
\def\om{\omega}
\def\ra{\overline}
\def\m1{^{ \hbox{\small{-}}1}}
\title{A differential bialgebra associated to a set theoretical solution of the Yang-Baxter equation}
\author{Marco A. Farinati\thanks{Member of CONICET. Partially supported by
PIP 11220110100800CO, and UBACYT 20021030100481BA, mfarinat@dm.uba.ar.} 
\ and Juliana Garc\'ia Galofre\thanks{Partially supported by 
PIP 11220110100800CO and UBACYT 20021030100481BA,
jgarciag@dm.uba.ar }
}
\begin{document}
\maketitle
\begin{abstract}

For a set theoretical solution of the Yang-Baxter equation $(X,\sigma)$, we
define a d.g. bialgebra $B=B(X,\sigma)$, containing the semigroup algebra
$A=k\{X\}/\langle xy=zt : \sigma(x,y)=(z,t)\rangle$,
 such that $k\ot_AB\ot_Ak$ and $\Hom_{A-A}(B,k)$ are respectively
the homology and cohomology complexes computing 
biquandle homology and cohomology defined in 
\cite{CEGN, CJKS} and other  generalizations of cohomology 
of rack-quanlde case (for example defined in \cite{CES}). 
This algebraic structure allow 
us to show the existence of an associative product in the cohomology of biquandles,
and a comparison map with Hochschild (co)homology of the algebra $A$.

\end{abstract}

\section{Introduction}

A quandle is a set $X$ together with a binary  operation
$*:X\times X\to X$ satisfying certain conditions (see definition
on example \ref{exrack} below), it  generalizes the operation
of conjugation on a group, but also  is an algebraic structure that behaves well with respect to
Reidemeister moves, so it is very useful for defining knot/links invariants. Knot theorists
have defined a cohomology theory for quandles (see \cite{CJKS}
and \cite{tCES}) in such a way that 2-cocycles give rise to knot invariants by means of 
the so-called state-sum procedure. 
Biquandles are generalizations of quandles in the sense that quandles give rise to solutions
of the Yang-Baxter equation by setting $\sigma(x,y):=(y,x*y)$. For biquandles there is also a cohomology theory and state-sum procedure for producing knot/links invariants
(see \cite{CES}).

In this work, for a set theoretical solution of the Yang-Baxter equation $(X,\sigma)$, we
define a d.g. algebra $B=B(X,\sigma)$, containing the semigroup algebra
$A=k\{X\}/\langle xy=zt : \sigma(x,y)=(z,t)\rangle$,
 such that $k\ot_AB\ot_Ak$ and $\Hom_{A-A}(B,k)$ are respectively
the standard homology and cohomology complexes attached to general set theoretical
solutions of the Yang-Baxter equation. 
We prove that this d.g. algebra has a natural structure of d.g. {\em bialgebra}
(Theorem \ref{teobialg}). Also, depending on properties of the solution $(X,\sigma)$
(square free, quandle type, biquandle, involutive,...) this d.g. bialgebra $B$ has 
natural (d.g. bialgebra) quotients, giving  rise to the standard
sub-complexes computing quandle cohomology (as sub-complex of rack homology),
 biquandle cohomology, etc.

As a first consequence of our construction, we give a very simple and  purely algebraic proof
of the existence of a cup product in cohomology. This was known for rack cohomology
(see \cite{Cl}), the proof was based on topological methods, but it was unknown for biquandles
or general solutions of the Yang-Baxter equation.
A second consequence is the existence of a comparison map between Yang-Baxter (co)homology
and Hochschild (co)homology of the semigroup algebra $A$. Looking carefully this comparison map
we prove that it factors through a complex of "size" $A\ot \B\ot A$, where
$\B$ is the Nichols algebra associated to the solution $(X,-\sigma)$.
This result leads to new questions, for instance when $(X,\sigma)$ is
 involutive (that is $\sigma^2=\id$) and the characteristic
is zero we show that this complex is acyclic (Proposition \ref{propinvo}), we wander if
 this is true in any other characteristic, and for non necessarily involutive solutions.

{\bf Acknowledgements:}
The first author wishes to thank Dominique Manchon for fruitful discussion  during a visit to
 Laboratoire de math\'ematiques de l'Universit\'e Blaise Pascal where  a preliminary version of 
the bialgebra $B$ for racks came up. He also want to thanks Dennis Sullivan for very pleasant
stay in Stony Brook where the contents of this work was discussed in detail, in particular,
the role of Proposition \ref{fnormal} in the whole construction.

\subsection{Basic definitions}
A set theoretical solution of the Yang-Baxter equation (YBeq) is a pair $(X, \sigma)$ where $\sigma: X\times X\rightarrow X\times X$
is a bijection satisfying 
\[
 (\id\times\sigma)(\sigma\times \id)(\id\times\sigma)=(\sigma\times \id)(\id\times\sigma)(\sigma\times \id):X\times X\times X\rightarrow X\times X\times X
\]

If $X=V$ is a $k$-vector space and $\sigma$ is a linear bijective map satisfying YBeq 
then it is called a braiding on $V$.

\begin{ex}\label{exrack} A set $X$ with a binary operation $\t:X\times X\rightarrow X\times X$ is called a rack if 
\begin{itemize}
 \item $-\t x:X\rightarrow X$ is a bijection $\forall x\in X$ and 
 \item $(x\t y)\t z=(x\t z)\t (y\t z)$ $\forall x,y,z \in X$. 
\end{itemize}

$x\t y $ is usually denoted by $x^y$.

If $X$ also verifies that $x\t x=x$ then $X$ is called a {\em quandle}.

An important example of rack is $X=G$ a group, $x\t y=y^{-1}xy$.

If $(X,\t)$ is a rack, then \[
                             \sigma(x,y)=(y, x\t y)
                            \]
  is a set theoretical solution of the YBeq.
\end{ex}  

  Let $M=M_X$ be the monoid freely generated in $X$  with relations \[
                                                                 xy=zt
                                                                \]
$\forall x,y,z,t$ such that $\sigma(x,y)=(z,t)$. Denote $G_X$ the group with the same generators and relations.
For example, when 
$\sigma= \text{flip}$ then $M=\N_0^{(X)}$ and $G_X=\Z_0^{(X)}$. If $ \sigma=\id$ then $M$ is the free (non abelian)
monoid in $X$. If $\sigma$ comes from a rack $(X,\t)$ then $M$ is the monoid with relation   
$xy=y(x\t y)$ and $G_X$ is the group with relations $x\t y=y^{-1}xy$.

\section{A d.g. bialgebra associated to $(X,\sigma)$}
Let $k$ be a commutative ring with 1. 
Fix $X$ a set, and $\sigma:X\times X\to X\times X$ a solution of the YBeq.
Denote $A_\sigma(X)$, or simply $A$ if $X$ and $\sigma$ are understood,
 the quotient of the free $k$ algebra on generators $X$
modulo the ideal generated by elements of the form $xy-zt$ whenever $\sigma(x,y)=(z,t)$:
\[
A:=k\langle X\rangle/\langle xy-zt : x,y\in X,\ (z,t)=\sigma(x,y)\rangle=k[M]
\]
It can be easily seen that
$A$ is a $k$-bialgebra declaring $x$ to be grouplike for any $x\in X$,
 since $A$ agrees with the semigroup-algebra on $M$ (the monoid
freely generated by $X$ with relations $xy\sim zt$).
If one considers  $G_X$, the group freely generated by
$X$ with relations $xy=zt$, then $k[G_X]$ is the (non commutative) localization of $A$, where one has inverted the
elements of $X$.
An example of $A$-bimodule that will be used later, which is actually a $k[G_X]$-module, is
 $k$ with $A$-action determined on generators by
\[
x\lambda y=\lambda, \ \forall x,y\in X,\ \lambda\in k
\]
We define $B(X,\sigma)$ (also denoted by $B$) the algebra freely generated 
by three copies of $X$, denoted $x$, $e_x$ and $x'$,
with  relations as follows:
whenever $\sigma(x,y)=(z,t)$ we have
\begin{itemize}
\item $ xy\sim zt$ , $xy'\sim z't$, $x'y'\sim z't'$
\item $ xe_{y}\sim e_zt$, $ e_xy'\sim z'e_{t}$
\end{itemize}
Since the relations are homogeneous, $B$ is a graded algebra  declaring
 \[
  |x|=|x'|=0,\ \ |e_x|=1
 \]

\begin{teo}\label{teobialg}
 The algebra $B$ admits the structure of a differential graded bialgebra, with $d$ the unique superderivation satisfying
  \[
  d(x)=d(x')=0,\ \
  d(e_x)=x-x'
 \]
and comultiplication determined by 
\[
\Delta(x)=x\ot x,\
\Delta(x')=x'\ot x',\
\Delta(e_x)=x'\ot e_x+e_x\ot x
\]
\end{teo}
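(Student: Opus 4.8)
The plan is to exploit the fact that $B$ is presented as a free algebra modulo relations, so that every structure map we want to build is determined by its values on the generators $x,x',e_x$ and exists as soon as it is compatible with the defining relations. Concretely I would use two elementary principles repeatedly: (i) a superderivation of a graded algebra is determined by its values on algebra generators, and descends to a quotient precisely when it sends each relator into the ideal of relations; and (ii) an algebra homomorphism out of $B$ is likewise determined on generators and well defined as soon as it respects the relations. Throughout, $B\ot B$ carries the graded (Koszul-signed) tensor-product algebra structure $(a\ot b)(c\ot d)=(-1)^{|b||c|}ac\ot bd$, and its differential is $d\ot\id+\id\ot d$ with the sign rule $(d\ot\id+\id\ot d)(a\ot b)=d(a)\ot b+(-1)^{|a|}a\ot d(b)$.

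\emph{The differential.} First I define $d$ as the unique superderivation of degree $-1$ on the free algebra with $d(x)=d(x')=0$ and $d(e_x)=x-x'$, and check it passes to $B$. The degree-$0$ relators $xy-zt$, $xy'-z't$, $x'y'-z't'$ are killed automatically since $d$ vanishes on degree-$0$ generators. The two mixed relators are the substantive case: applying $d$ to $xe_y-e_zt$ gives $x(y-y')-(z-z')t=(xy-zt)-(xy'-z't)$, a combination of degree-$0$ relators, hence zero in $B$; the relator $e_xy'-z'e_t$ is handled the same way. Once $d$ is defined on $B$, the identity $d^2=0$ follows because $d^2$ is an (even) derivation vanishing on the generators $x,x',e_x$, hence identically zero.

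\emph{The comultiplication and counit.} Next I define $\Delta:B\to B\ot B$ as the algebra map specified on generators and verify it respects the relations, working in the graded tensor algebra. The degree-$0$ relators again present no difficulty. For the mixed relators one computes, e.g., $\Delta(x)\Delta(e_y)=xy'\ot xe_y+xe_y\ot xy$ and $\Delta(e_z)\Delta(t)=z't\ot e_zt+e_zt\ot zt$; these agree after invoking the three relations $xy=zt$, $xy'=z't$ and $xe_y=e_zt$ of $B$ simultaneously, and the analogous computation settles $e_xy'-z'e_t$. Coassociativity is then checked on generators (the only nontrivial case being $e_x$, where both $(\Delta\ot\id)\Delta$ and $(\id\ot\Delta)\Delta$ produce $x'\ot x'\ot e_x+x'\ot e_x\ot x+e_x\ot x\ot x$), and the counit $\varepsilon$ with $\varepsilon(x)=\varepsilon(x')=1$, $\varepsilon(e_x)=0$ is seen to be a well-defined algebra map satisfying the counit axioms.

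\emph{Compatibility.} Finally I show $\Delta$ is a chain map, i.e. $(d\ot\id+\id\ot d)\circ\Delta=\Delta\circ d$. The key observation is that both $(d\ot\id+\id\ot d)\circ\Delta$ and $\Delta\circ d$ are superderivations from $B$ to the $B$-bimodule $B\ot B$ (with $B$ acting through $\Delta$), because $\Delta$ is multiplicative and both differentials satisfy the Leibniz rule; hence their difference $D$ is again such a derivation and it suffices to check $D=0$ on generators. This is immediate for $x,x'$, and for $e_x$ both terms evaluate to $x\ot x-x'\ot x'$, so $D(e_x)=0$. The same generator-level check gives $\varepsilon d=0$, completing the proof that $(B,d,\Delta,\varepsilon)$ is a d.g. bialgebra. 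I expect the main obstacle to be exactly the well-definedness of $\Delta$ on the mixed relators: this is where the Koszul signs of the graded tensor product must be tracked and where several relations of $B$ are needed at once, whereas the remaining verifications reduce cleanly to checks on the three generators.
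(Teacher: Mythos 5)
Your proposal is correct and follows essentially the same route as the paper: define $d$ and $\Delta$ on generators of the free algebra, verify compatibility with the defining relations of $B$ (the mixed relations $xe_y\sim e_zt$ and $e_xy'\sim z'e_t$ being the substantive cases, settled by invoking the degree-zero relations simultaneously), and then check the coderivation/chain-map identity on generators. Your additions—the counit, the coassociativity verification, and the explicit remark that both $(d\otimes\mathrm{id}+\mathrm{id}\otimes d)\circ\Delta$ and $\Delta\circ d$ are superderivations along $\Delta$ so that a generator-level check suffices—only make explicit what the paper leaves implicit (it states coassociativity as a separate remark, omits the counit, and checks the coderivation property on generators without justification).
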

By differential graded bialgebra we mean that the differential is both a derivation with respect to multiplication, and 
coderivation with respect to comultiplication.
\begin{proof}
In order to see that $d$ is well-defined as super derivation, one must check that the relations
 are compatible with $d$. The first relations
are easier since
\[
d(xy-zt)=
d(x)y+xd(y)-d(z)t-zd(t)=0+0-0-0=0
\]
and similar for the others
(this implies that $d$ is $A$-linear and $A'$-linear). For the rest of the relations:
\[
d(xe_{y}-e_zt)=
xd(e_y)-d(e_z)t
=
x(y-y')-(z-z')t
\]
\[
=xy-zt-(xy'-z't)=0
\]
\[
 d(e_xy'-z'e_{t})
=(x-x')y'-z'(t-t')
=xy'-z't -(x'y'-z't')=0
\]
It is clear now that $d^2=0$ since $d^2$ vanishes on generators.
In order to see that $\Delta$ is well defined, we compute

\[
 \Delta(xe_y-e_zt)
 =
(x\ot x)( y'\ot e_y+e_y\ot y)
-(z'\ot e_z+e_z\ot z)(t\ot t)
\]
\[=
xy'\ot xe_y+xe_y\ot xy
-z't\ot e_zt-e_zt\ot zt
\]
and using the relations we get
\[=
xy'\ot xe_y+xe_y\ot xy
-xy'\ot xe_y-xe_y\ot xy=0
\]
similarly
\[
 \Delta(x'e_y-e_zt')
 =
(x'\ot x')( y'\ot e_y+e_y\ot y)
-(z'\ot e_z+e_z\ot z)(t'\ot t')
\]
\[
=
x'y'\ot x'e_y+x'e_y\ot x'y
-z't'\ot e_zt'-e_zt'\ot zt'
\]
\[=
x'y'\ot x'e_y+x'e_y\ot x'y
-x'y'\ot x'e_y-x'e_y\ot x'y=0
\]
This proves that $B$ is a bialgebra, and $d$ is (by construction) a derivation.
Let us see that it is also a coderivation:
\[
(d\ot 1+1\ot d)(\Delta(x))=
(d\ot 1+1\ot d)(x\ot x)=0=\Delta(0)=\Delta(dx)
\]
for  $x'$ is the same. For $e_x$:
\[
(d\ot 1+1\ot d)(\Delta(e_x))=
(d\ot 1+1\ot d)(x'\ot e_x+e_x\ot x)
\]
\[=
x'\ot (x-x')+(x-x')\ot x
=
x'\ot x-x'\ot x'+x\ot x
-x'\ot  x
\]
\[=
-x'\ot x'+x\ot x
=\Delta(x-x')=\Delta(de_x)
\]
 \end{proof}
 \begin{rem}
  $\Delta$ is coassociative.
 \end{rem}

For a particular element of the form
$b=e_{x_1}\dots e_{x_n}$, the formula for $d(b)$ can be computed as follows:
\[  
d(e_{x_1}\dots e_{x_n})=\sum_{i=1}^{n} (-1)^{i+1}e_{x_1}\dots e_{x_{i-1}}d(e_{x_i}) e_{x_{i+1}}\dots e_{x_n}
\]
\[
=\sum_{i=1}^{n} (-1)^{i+1}e_{x_1}\dots e_{x_{i-1}}(x_i-x'_i)e_{x_{i+1}}\dots e_{x_n}
\]
\[=
\overbrace{\sum_{i=1}^{n} (-1)^{i+1}e_{x_1}\dots e_{x_{i-1}}x_ie_{x_{i+1}}\dots e_{x_n}}^{I}
 -\overbrace{\sum_{i=1}^{n} (-1)^{i+1}e_{x_1}\dots e_{x_{i-1}}x'_ie_{x_{i+1}}\dots e_{x_n}}^{II}
\]
If one wants to write it in a normal form (say, every $x$ on the right, every $x'$ on the left,
and the $e_x$'s in the middle), then one should use the relations in $B$: this might be a very
complicated formula, depending on the braiding. We give examples in some particular cases.
Lets denote $\sigma(x,y)=(\sigma^1\!(x,y), \sigma^2(x,y))$. 

 \begin{ex} In low degrees we have
 \begin{itemize}
  \item $d(e_x)=x-x'$
  \item $d(e_xe_y)=(e_zt-e_xy)-(x'e_y-z'e_t)$, where as usual $\sigma(x,y)=(z,t)$.
  \item $d(e_{x_1}e_{x_2}e_{x_3})=A_I-A_{II}$
  where
  
  $A_I=e_{\sigma^1\!(x_1,x_2)}e_{\sigma^1\!(\sigma^2(x_1,x_2),x_3)}\sigma^2(\sigma^2(x_1,x_2),x_3)-e_{x_1}e_{\sigma^1\!(x_2,x_3)}
  \sigma^2(x_2,x_3)+e_{x_1}e_{x_2}x_3$
  
  $A_{II}= x_1'e_{x_2}e_{x_3}-\sigma^1\!(x_1,x_2)'e_{\sigma^2(x_1,x_2)}e_{x_3}+
  \sigma^1\!(x_1,\sigma^1\!(x_2,x_3))'e_{\sigma^2(x_1,\sigma^1\!(x_2,x_3))}e_{\sigma^2(x_2,x_3)}$
  
  In particular, if $f:B\to k$ is an $A$-$A'$ linear map, then
  \[
f(d(e_{x_1}e_{x_2}e_{x_3}))=  
f(e_{\sigma^1\!(x_1,x_2)}e_{\sigma^1\!(\sigma^2(x_1,x_2),x_3)})
-f(e_{x_1}e_{\sigma^1\!(x_2,x_3)})+f(e_{x_1}e_{x_2})
\]\[-f(e_{x_2}e_{x_3})+f(e_{\sigma^2(x_1,x_2)}e_{x_3})-
f(e_{\sigma^2(x_1,\sigma^1\!(x_2,x_3))}e_{\sigma^2(x_2,x_3)})
\]
Erasing the $e$'s we notice the relation with the cohomological complex given in \cite{CES},
see Theorem \ref{teocomplejo} below.

   \end{itemize}
If  $X$ is a rack and $\sigma$ the braiding defined by  $\sigma(x,y)=(y,x\t y)=(x,x^y)$, then:
 \begin{itemize}
  \item $d(e_x)=x-x'$
  \item $d(e_xe_y)=
( e_{y}x^{y}
- e_{x}y)
-(x'e_{y}-y'e_{x^y})$
  \item $d(e_xe_ye_z)=
e_xe_yz
-e_xe_zy^z
+e_ye_zx^{yz}
-x'e_ye_z
+y'e_{x^y}e_z
-z'e_{x^z}e_{y^z}$.
\item In general, expressions I and II are
\[
 I=\sum_{i=1}^{n} (-1)^{i+1} e_{x_1}\dots e_{x_{i-1}}e_{x_{i+1}}\dots e_{x_n}x_{i}^{x_{i+1}\dots x_{n}}
\]
\[
 II=\sum_{i=1}^{n} (-1)^{i+1}x'_ie_{x_1^{x_i}}\dots e_{x_{i-1}^{x_i}}e_{x_{i+1}}\dots e_{x_n}
\]

then

\[
 \partial f(x_1,\dots,x_n)=f(d(e_{x_1}\dots e_{x_n}))=\]
\[
 \sum_{i=1}^{n} (-1)^{i+1} \left(f(x_1,\dots, x_{i-1},x_{i+1},\dots, x_n)x_{i}^{x_{i+1}\dots x_{n}}-x'_if({x_1}^{x_i},\dots
 , x_{i-1}^{x_i},x_{i+1},\dots, x_n)\right)
 \]
 
Let us consider $k\ot_{k[M']} B\ot_{k[M]} k$
then  $d$ represents the canonical differential of rack homology and 
$\partial f (e_{x_1}\dots e_{x_n})=f(d(e_{x_1}\dots e_{x_n}))$ gives the traditional rack cohomology structure. 

In particular, taking trivial coefficients:
 \[
 \partial f(x_1,\dots,x_n)=f(d(e_{x_1}\dots e_{x_n}))=\]
\[
 \sum_{i=1}^{n} (-1)^{i+1} \left(f(x_1,\dots, x_{i-1},x_{i+1},\dots, x_n)-f({x_1}^{x_i},\dots, x_{i-1}^{x_i},x_{i+1}\dots, x_n)\right)
\]

 \end{itemize}

\end{ex}

 \begin{teo}\label{teocomplejo}
Taking in $k$ the trivial $A'$-$A$-bimodule, the complexes associated to set theoretical Yang-Baxter solutions
defined in \cite{CES} can be recovered as
\[
 (C_\bullet(X,\sigma), \partial)\simeq (k\ot_{A'} B_\bullet \ot_{A} k, \partial=id_k\ot_{A'} d\ot_{A}id_k)
\]

\[
 (C^\bullet(X,\sigma), \partial^*)\simeq (\Hom_{A'-A}(B, k), \partial^*=d^*)
\]
 \end{teo}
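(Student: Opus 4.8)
The plan is to deduce both isomorphisms from a single structural fact: in each fixed degree $n$, the component $B_n$ is a \emph{free} $A'$-$A$-bimodule with basis the pure $e$-words $\{e_{x_1}\cdots e_{x_n}:(x_1,\dots,x_n)\in X^n\}$, where $A'=k[M']$ is the subalgebra generated by the primed copy. Granting this, the rest is formal. Write $\epsilon$ for the augmentation $A\to k$ (and $A'\to k$) sending every generator to $1$; this is exactly the trivial bimodule structure on $k$ described above. Then $k\ot_{A'}B_n\ot_A k$ collapses each normal-form element $a'\,e_{x_1}\cdots e_{x_n}\,a$ to $\epsilon(a')\epsilon(a)\,(x_1,\dots,x_n)$, so the underlying $k$-module is the free module on $X^n$, i.e. the chain group $C_n(X,\sigma)$ of \cite{CES}. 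Dually, an $A'$-$A$-linear map $f\colon B_n\to k$ is determined by its values on the basis and satisfies $f(a'\,w\,a)=\epsilon(a')\epsilon(a)f(w)$; hence $\Hom_{A'-A}(B_n,k)$ is identified with the functions $X^n\to k$, i.e. the cochain group $C^\bullet(X,\sigma)$.

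Second, I would establish the free-bimodule claim, which is the technical heart. The relations $xe_y\sim e_zt$ and $e_xy'\sim z'e_t$ are to be read as rewriting rules that push every unprimed generator to the right of all the $e$'s and every primed generator to their left, while $xy\sim zt$, $xy'\sim z't$, $x'y'\sim z't'$ reorder the degree-$0$ letters; any monomial is thereby driven to the normal form ``primed letters $\cdot$ $e$-word $\cdot$ unprimed letters''. To see that no further collapse occurs, i.e. that the distinct normal forms remain linearly independent, I would invoke the Diamond Lemma and check that every overlap ambiguity is resolvable. The only nontrivial ambiguities come from a degree-$0$ letter meeting two consecutive $e$'s, or from three consecutive degree-$0$ letters, and in each case the two ways of reducing agree precisely because $\sigma$ satisfies the Yang-Baxter equation. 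This is the step I expect to be the main obstacle: one must organize the reductions so that a single application of YBE closes each critical pair, ruling out relations among the $e$-words themselves (there being no relation involving two $e$'s directly).

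Third, with the identification of the graded modules in hand, I would match the differentials. Under $\id_k\ot_{A'}d\ot_A\id_k$ the image of $e_{x_1}\cdots e_{x_n}$ is obtained by applying $d$ and then $\epsilon$ on both sides. From the formula $d(e_{x_1}\cdots e_{x_n})=I-II$ computed above, the summand $I$ carries a trailing unprimed factor and $II$ a leading primed factor; applying $\epsilon$ erases exactly these factors, leaving the alternating sum of $(n-1)$-tuples built from $\sigma^1$ and $\sigma^2$. A term-by-term comparison with the boundary of \cite{CES}, already carried out in degrees $\le 3$ in the Example above and to all orders in the rack case, shows the two agree, proving the homological isomorphism. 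For cohomology the coboundary is $\partial^*f=d^*f$, and by $A'$-$A$-linearity $\partial^*f(e_{x_1}\cdots e_{x_n})=f(d(e_{x_1}\cdots e_{x_n}))$; since $f$ annihilates the primed and unprimed tails, this reproduces the \cite{CES} coboundary, giving the cohomological isomorphism. Both identifications are natural and commute with the differentials by construction, so they are isomorphisms of complexes.
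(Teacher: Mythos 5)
Your overall strategy coincides with the paper's: everything is reduced to the structural fact that $B_n$ is a free $A'$-$A$-bimodule on the $e$-words (this is exactly Proposition \ref{fnormal}), after which both identifications follow formally by applying $k\ot_{A'}(-)\ot_A k$ and $\Hom_{A'-A}(-,k)$, and the differentials match because normalization pushes the degree-$0$ letters produced by $d$ to the outside, where the augmentations kill them. The gap is in your execution of the Diamond Lemma step, which you yourself single out as the technical heart. You treat $xy\sim zt$ and $x'y'\sim z't'$ as rewriting rules; these cannot be oriented into a reduction-finite system, since whenever a $\sigma$-orbit on $X\times X$ is finite (any involutive solution, or any finite $X$) the rule $xy\to zt$ produces cycles $xy\to zt\to\cdots\to xy$. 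Relatedly, your claim that ``distinct normal forms remain linearly independent'' is false as stated: $xy$ and $zt$ are distinct words in your normal form (all letters unprimed) yet are equal in $B$. The correct statement is freeness of $B$ as a bimodule over the \emph{quotient} algebras $A'$ and $A$, not $k$-linear independence of words. The paper avoids both problems by running the reduction system only on the three mixed families $r_1(xy')=z't$, $r_2(xe_y)=e_zt$, $r_3(e_xy')=z'e_t$ inside $k\langle Y\rangle$ (termination controlled by the ``disorder degree''), obtaining $k\langle Y\rangle/\langle xy'-z't,\ xe_y-e_zt,\ e_xy'-z'e_t\rangle\cong TX'\ot TE\ot TX$, and only afterwards quotienting by $xy-zt$ and $x'y'-z't'$, which live entirely in the outer tensor factors.

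Your list of ambiguities is also wrong, and this matters because it is where the Yang-Baxter equation must enter. For the mixed system the proper suffixes of the left-hand sides are a primed letter or an $e$, while the proper prefixes are an unprimed letter or an $e$; hence the \emph{unique} overlap is $x\,e_y\,z'$ --- a single $e$ flanked by an unprimed letter on the left and a primed letter on the right (overlap of $r_2$ with $r_3$) --- and its resolvability, $r_3\circ r_1\circ r_2(xe_yz')=r_2\circ r_1\circ r_3(xe_yz')$, is precisely the YBeq, as the paper computes. The configurations you name do not do this job: a word like $xe_ye_z$ (one degree-$0$ letter meeting two consecutive $e$'s) contains a single redex, so there is nothing to resolve; and three consecutive degree-$0$ letters form a critical pair only under your non-terminating orientation of the block-internal relations, where resolvability would amount to solving the word problem in $M$ rather than to an application of the YBeq. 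So, as written, the one critical pair that must be checked never gets checked, while the pairs you propose to check either do not exist or force you into the flawed system above. The fix is not a different idea but the paper's sharper bookkeeping: rewrite only the mixed relations, identify $x e_y z'$ as the sole ambiguity, resolve it by the YBeq, and pass to the quotient at the end.
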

In the  proof of the theorem  we will assume  first Proposition \ref{fnormal}
that says that one has a left $A'$-linear and right $A$-linear
isomorphism: 
\[B\cong A'\ot TE\ot A\]
where $A'=TX'/(x'y'=z't': \sigma(x,y)=(z,t))$ and $A=TX/(xy=zt: \sigma(x,y)=(z,t))$.
 We will prove 
 Proposition \ref{fnormal} later.

 \begin{proof}

In this setting every expression in $x,x',e_x$, using the relations defining $B$, can be written as
$x'_{i_1}\cdots x'_{i_n} e_{x_1}\cdots e_{x_k} x_{j_1}\cdots x_{j_l}$, tensorizing leaves the expression  
$$1\ot e_{x_1} \cdots e_{x_k}\ot 1$$
This shows that $T=k\ot_{k[M']} B\ot_{k[M]} k\simeq T\{e_x\}_{x\in X}$, where $\simeq$ means isomorphism of $k$-modules. 
This also induces isomorphisms of complexes

\[
 (C_\bullet(X, \sigma), \partial)\simeq (k\ot_{A'} B_\bullet \ot_{A} k, \partial= id_k\ot_{A'} d\ot_{A}id_k)
\]

\[
 (C^\bullet(X, \sigma), \partial^*)\simeq (\Hom_{A'-A}(B, k), d^*)
\]
 \end{proof}
 
Now we will prove Proposition \ref{fnormal}:
Call
$Y=\langle x,x',e_x\rangle_{x\in X}$ the free monoid in $X$ with unit 1, $k\langle Y\rangle$ the $k$ algebra associated to $Y$.
Lets define
$w_1=xy'$, $w_2=xe_y$ and $w_3=e_xy'$.
Let $S=\{r_1,r_2,r_3\}$ be the  reduction system  defined as follows: $r_i:k\langle Y\rangle\rightarrow k\langle Y\rangle$ the families of 
$k$-module endomorphisms
such that  $r_i$ fix all elements except 

$r_1(xy')=z't$,\ \ $r_2(xe_y)=e_zt$ and 
$r_3(e_xy')=z'e_t$. 

Note that $S$ has more than 3 elements, each $r_i$ is a family of reductions.
\

\begin{defi}
A reduction $r_i$ {\em acts trivially} on an element $a$ if $w_i$ does not apear in $a$, ie: $Aw_iB$ apears with coefficient 0.
\end{defi}

Following \cite{B}, $a\in k\langle Y\rangle$ is called  {\em irreducible} if $Aw_iB$ does not appear for $i\in\{1,2,3\}$. Call
$k_{irr}\langle Y\rangle$  the  $k$ submodule of irreducible elements of $k\langle Y\rangle$.
 A finite sequence of reductions is called {\em final} in $a$ if $r_{i_n}\circ \dots \circ r_{i_1}(a)\in k_{irr}(Y)$.
An element $a\in k\langle Y\rangle$ is called {\em reduction-finite} if for every sequence of reductions $r_{i_n}$ acts trivially on
  $r_{i_{n-1}}\circ \dots \circ r_{i_1}(a)$ for sufficiently large $n$.
   If a is reduction-finite, then any maximal sequence of reductions, such that  each $r_{i_j}$
   acts nontrivially on $r_{i_{(j-1)}}\dots r_{i_1}(a)$, will be finite, and hence a final 
sequence. It follows that the reduction-finite elements form 
a k-submodule of $k\langle Y\rangle$ 
 $a\in k\langle Y\rangle$ is called {\em reduction-unique} if is reduction finite and it's image under every finite
  sequence of reductions is the same.
  This comon value will be denoted $r_s(a)$.

\begin{defi}
 Given a monomial $a\in k\langle Y \rangle$ we define the disorder degree of 
 $a$, $\hbox{disdeg}(a)=\sum_{i=1}^{n_x}rp_i+\sum_{i=1}^{n_{x'}}lp_j$, where 
 $rp_i$ is the position of the $i$-th letter ``$x$'' counting from right to  left, and $lp_i$ is the position of the $i$-th letter ``$x'$'' 
 counting from left to right.
 
 If $a=\sum_{i=1}^{n} k_i a_i$ where $a_i$ are monomials in leters of $X,X', e_X$ and $k_i\in K-\{0\} $,
 \[\hbox{disdeg}(a):=\sum_{i=1}^{n}\hbox{disdeg}(a_i)\]
\end{defi}

\begin{ex}\begin{itemize}
           \item $\hbox{disdeg}(x_1e_{y_1}x_2z'_1x_3z'_2)=(2+4+6)+(4+6)=22$
           \item $\hbox{disdeg}(xe_yz')=3+3=6$ and $\hbox{disdeg}(x'e_yz)=1+1$
           \item $\hbox{disdeg}(\prod_{i=1}^{n}x'_i\prod_{i=1}^{m}e_{y_i}\prod^{k}_{i=1}z_i)=\frac{n(n+1)}{2}+\frac{k(k+1)}{2}$
           \end{itemize}

           \end{ex}

 The reduction $r_1$ lowers  disorder degree in two and reductions $r_2$ and $r_3$ lowers disorder degree in one. 
\begin{rem}\begin{itemize}
\item $k_{irr}(Y)=\{\sum A'e_B C: A' \ \hbox{word in}\  X', e_B \hbox{word in}\ e_x, C \ \hbox{word in}\ X\}$.
\item $k_{irr}\simeq TX'\ot TE\ot TX$
\end{itemize}
\end{rem}

Take for example $a=xe_yz'$, there are two possible sequences of final reductions: $r_3\circ r_1\circ r_2$ or $r_2\circ r_1\circ r_3$.
The result will be $a=A'e_B C$ and $a=D'e_E F$ respectively, where 

$A=\sigma^{(1)}\left(\sigma^{(1)}(x,y),\sigma^{(1)}(\sigma^{(2)}(x,y),z)\right)$

$B=\sigma^{(2)}\left(\sigma^{(1)}(x,y),\sigma^{(1)}(\sigma^{(2)}(x,y),z)\right)$

$C=\sigma^{(2)}\left(\sigma^{(2)}(x,y),z\right)$

$D=\sigma^{(1)}\left(x,\sigma^{(1)}(y,z)\right)$

$E=\sigma^{(1)}\left(\sigma^{(2)}(x,\sigma^{(1)}(y,z),\sigma^{(2)}(y,z))\right)$

$F=\sigma^{(2)}\left(\sigma^{(2)}(x,\sigma^{(1)}(y,z),\sigma^{(2)}(y,z))\right)$

We have $A=D$, $B=E$ and $C=F$ as $\sigma$ is a solution of YBeq, hence \\
$r_3\circ r_1\circ r_2(xe_yz')=r_2\circ r_1\circ r_3(xe_yz')$.

A monomial $a$ in $k\langle Y\rangle$ is said to have  an {\em overlap ambiguity} of $S$ if $a=ABCDE$
such that $w_i=BC$ and $w_j=CD$. We shall say the 
overlap ambiguity  is {\em resolvable} if there exist compositions of 
reductions, $r,r'$ such that $r(Ar_i(BC)DE)=r'(ABr_j(CD)E)$.
Notice that it is enough to take $r=r_s$ and $r'=r_s$.
\begin{rem}
In our case, there is only one type of overlap ambiguity and is the one we solved previously.
\end{rem}
\begin{proof}
 There is no rule with $x'$ on the left nor rule with $x$ on the right, so there will be no overlap ambiguity including the family $r_1$. 
There is only one type of ambiguity involving reductions $r_2$ and $r_3$.
 \end{proof}

Notice that  $r_s$  is a  proyector and  $I=\langle xy'-z't,xe_y-e_zt,e_xy'-z'e_t\rangle$ is trivially included in the kernel. 
We claim that it is actually equal:
\begin{proof}
 As $r_s$ is a proyector, an element  $a\in \ker$ must be $a=b-r_s(b)$ where $b\in k\langle Y\rangle$. It is enough to prove it for monomials $b$.
 \begin{itemize}
  \item if $a=0$ the result follows trivially.
  \item if not, then take a monomial  $b$ where at least one of the products $xy'$, $xe_y$ or $e_xy'$ appear. Lets suppose $b$ has a factor $xy'$
  (the rest of the cases are analogous). 
  
  $b=Axy'B$ where $A$ or $B$ may be empty words. $r_1(b)=Ar_1(xy')B=Az'tB$. Now we can rewrite:
  
  $b-r_s(b)=\underbrace{Axy'B-Az'tB}_{\in I}+Az'tB-r_s(b)$. 
  As $r_1$ lowers $\dd$ in two, we have $\dd(Az'tB-r_s(b))<\dd(b-r_s(b))$ then in a finite number of steps we get  $b=\sum^{N}_{k=1} i_k$
 where $i_k\in I$. It follows that $b\in I$.  
 \end{itemize}
 
\end{proof}
\begin{coro} $r_s$ induces a $k$-linear isomorphism: 
\[k\langle Y\rangle/\langle xy'-z't,xe_y-e_zt,e_xy'-z'e_t\rangle\rightarrow TX'\ot TE\ot TX \]
\end{coro}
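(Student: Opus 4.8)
The plan is to read the corollary as the output of Bergman's Diamond Lemma applied to the reduction system $S=\{r_1,r_2,r_3\}$, assembling the ingredients that the preceding paragraphs have already prepared. Concretely, I would first show that $r_s$ is a well-defined $k$-linear projector of $k\langle Y\rangle$ onto $k_{irr}\langle Y\rangle$ with kernel exactly $I=\langle xy'-z't,\,xe_y-e_zt,\,e_xy'-z'e_t\rangle$; the statement then follows formally, using the remark that identifies $k_{irr}\langle Y\rangle\cong TX'\ot TE\ot TX$.

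The first and main task is to guarantee that $r_s$ is defined on all of $k\langle Y\rangle$, that is, that every element is reduction-unique. The two hypotheses of the Diamond Lemma are termination and resolvability of ambiguities, and both have essentially been checked above. For termination I would use the disorder degree: $\dd$ is a nonnegative integer attached to each monomial, additive on $k$-linear combinations, and strictly lowered (by two for $r_1$, by one for $r_2$ and $r_3$) by every nontrivial reduction, even when that reduction is applied inside a larger word, since the local swap preserves the letters outside and only reorders one $X$-letter past one $X'$-letter. This simultaneously gives the descending chain condition—so every element is reduction-finite—and furnishes the well-founded ordering compatible with the reductions that the lemma requires. For ambiguities I would observe that there are no inclusion ambiguities and that the only overlap ambiguity is the $r_2$--$r_3$ overlap on the word $xe_yz'$: no overlap can involve $r_1$ because there is no rule with $x'$ on the left nor with $x$ on the right. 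That single ambiguity is resolvable, which is exactly the equality $r_3 r_1 r_2(xe_yz')=r_2 r_1 r_3(xe_yz')$ verified above using that $\sigma$ solves the YBeq. By the Diamond Lemma every element is therefore reduction-unique, $r_s$ is a well-defined $k$-linear map fixing $k_{irr}\langle Y\rangle$ pointwise, hence an idempotent with image $k_{irr}\langle Y\rangle$.

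It then remains only to assemble. The kernel of this projector has already been computed to equal $I$: one inclusion is immediate, and the reverse is the disorder-degree induction given just above, rewriting any $b$ containing a pattern $xy'$, $xe_y$ or $e_xy'$ as (an element of $I$) plus a word of strictly smaller $\dd$. Since $r_s$ is idempotent we obtain the splitting $k\langle Y\rangle=k_{irr}\langle Y\rangle\oplus I$, so $r_s$ descends to a $k$-linear isomorphism $k\langle Y\rangle/I\xrightarrow{\sim}k_{irr}\langle Y\rangle$; composing with the identification $k_{irr}\langle Y\rangle\cong TX'\ot TE\ot TX$ of the remark yields the claimed isomorphism.

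I expect the only genuine obstacle to lie in the Diamond Lemma bookkeeping—above all the complete enumeration of ambiguities and the verification that $\dd$ strictly decreases under each reduction in arbitrary context, so that it really induces a well-founded ordering compatible with $S$—rather than in the final assembly, which is purely formal once $r_s$ is known to be a projector with kernel $I$.
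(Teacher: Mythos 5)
Your proposal is correct and follows essentially the same route as the paper: Bergman's Diamond Lemma setup with the disorder degree $\dd$ ensuring termination, the single $r_2$--$r_3$ overlap on $xe_yz'$ resolved via the Yang--Baxter equation, the identification of $\ker(r_s)$ with $I$ by the same $\dd$-descent induction, and the projector splitting $k\langle Y\rangle=k_{irr}\langle Y\rangle\oplus I$ yielding the isomorphism. Your write-up merely makes explicit a few points the paper leaves implicit (e.g.\ absence of inclusion ambiguities), but there is no substantive difference in method.
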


Returning to our bialgebra, taking quotients we obtain the following proposition:

\begin{prop}\label{fnormal}
$B\simeq \left(TX'/(x'y'=z't')\right)\ot TE\ot \left(TX/(xy=zt)\right)$
\end{prop}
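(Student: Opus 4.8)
The plan is to obtain the Proposition from the preceding corollary by one further quotient. Write $I=\langle xy'-z't,\ xe_y-e_zt,\ e_xy'-z'e_t\rangle$ for the two-sided ideal generated by the three mixed relations (the ones governed by the reduction system $S$), and $J=\langle xy-zt,\ x'y'-z't'\rangle$ for the ideal generated by the two ``pure'' families. By definition $B=k\langle Y\rangle/(I+J)$, and the corollary provides a $k$-linear isomorphism $r_s\colon k\langle Y\rangle/I\xrightarrow{\sim} TX'\ot TE\ot TX$. Since left multiplication by a primed letter and right multiplication by an unprimed letter each preserve the normal form $A'e_BC$, this is in fact an isomorphism of $TX'$-$TX$-bimodules, $TX'$ acting on the left-hand factor and $TX$ on the right-hand one. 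By the third isomorphism theorem $B\cong (TX'\ot TE\ot TX)/\overline{J}$, where $\overline{J}$ is the image of $J$, i.e. the two-sided ideal generated by $r_s(xy-zt)=1\ot 1\ot(xy-zt)$ and $r_s(x'y'-z't')=(x'y'-z't')\ot 1\ot 1$.

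The whole statement then reduces to the identity
\[
\overline{J}\;=\;J'\ot TE\ot TX\;+\;TX'\ot TE\ot J_X,
\]
where $J_X=(xy-zt)\subseteq TX$ and $J'=(x'y'-z't')\subseteq TX'$ are the defining ideals of $A$ and $A'$; granting it, the quotient is $(TX'/J')\ot TE\ot(TX/J_X)=A'\ot TE\ot A$, and the bimodule structure shows the isomorphism is left $A'$-linear and right $A$-linear, as needed in the proof of Theorem \ref{teocomplejo}. The inclusion $\supseteq$ is immediate: multiplying the generator $(x'y'-z't')\ot1\ot1$ on the left by a primed word $u'$ and on the right by $v'e_BC$ keeps the word in normal form and yields $u'(x'y'-z't')v'\ot e_B\ot C$, and such elements span $J'\ot TE\ot TX$; symmetrically, multiplying $1\ot1\ot(xy-zt)$ by $A'e_Bu$ on the left and $v$ on the right spans $TX'\ot TE\ot J_X$.

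The substantive point, and the step I expect to be the main obstacle, is the reverse inclusion $\overline{J}\subseteq J'\ot TE\ot TX+TX'\ot TE\ot J_X$: I must show that pushing an embedded pure relation through the normal-form reduction again lands in the span of pure relations, the danger being that moving the unprimed block $xy-zt$ leftward past primed letters and $e$'s (via $r_1$ and $r_3$) scrambles it. Here the Yang-Baxter equation is exactly what is needed. I would first isolate the basic move: for a single primed letter $w'$ with $\sigma(x,y)=(z,t)$, reducing $xyw'$ and $ztw'$ to normal form gives $c'\,db$ and $g'\,hf$, and the braid relation $\sigma_{12}\sigma_{23}\sigma_{12}=\sigma_{23}\sigma_{12}\sigma_{23}$ applied to $(x,y,w)$ forces $c=g$ together with $\sigma(d,b)=(h,f)$, whence $c'\,db-g'\,hf=c'\ot(db-hf)$ with $db-hf\in J_X$. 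I would then promote this to the general case by induction on the number of primed letters and $e$'s the relation must cross, each crossing converting $xy-zt$ into a fresh relation of the same two types while the spectator factors are merely relabeled by $\sigma$; the consistency of these relabelings across two adjacent crossings is once more the resolvable overlap already verified for the ambiguity $xe_yz'$. Doing this bookkeeping uniformly, rather than only in the two illustrative cases above, is the sole genuinely technical part of the argument.
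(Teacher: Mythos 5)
Your proof is correct and takes essentially the same route as the paper: the paper also deduces Proposition \ref{fnormal} from the diamond-lemma corollary $k\langle Y\rangle/\langle xy'-z't,\,xe_y-e_zt,\,e_xy'-z'e_t\rangle\cong TX'\ot TE\ot TX$ by ``taking quotients'', which is exactly your skeleton. The identity $\overline{J}=J'\ot TE\ot TX+TX'\ot TE\ot J_X$ that you establish via the Yang--Baxter crossing argument is precisely the step the paper leaves implicit in the phrase ``taking quotients'' (its substance reappears later in the paper as the lemma that the braiding on the free monoid descends to the quotient monoid $M$), so your bookkeeping fills in, correctly, a detail the paper omits rather than following a different strategy.
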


Notice that $\overline{x_1\dots x_n}=\overline{\prod \beta_m\circ \dots \circ\beta_1(x_1,\dots, x_n)}$ where $\beta_i=\sigma^{\pm 1}_{j_i}$,
analogously with $\overline{x'_1\dots x'_n}$.

This ends the proof of Theorem \ref{teocomplejo}.


\begin{ex}
 \item If the coeficients are trivial, $f\in C^1(X,k)$ and we identify
  $C^1(X,k)=k^X$, then
    \[
   (\partial f)(x,y)=f(d(e_xe_y))=-f(x)-f(y)+f(z)+f(t)
  \]
where as usual $\sigma(x,y)=(z,t)$ 
(If instead of considering $\Hom_{A'-A}$, we consider $\Hom_{A-A'}$ then 
 $(\partial f)(x,y)=f(d(e_xe_y))=f(x)+f(y)-f(z)-f(t)$ but with $\sigma(z,t)=(x,y)$).

 \item Again with trivial coefficients, and $\Phi\in C^2(X,k)\cong k^{X^2}$, then
    \[
   (\partial \Phi)(x,y,z)=\Phi (d(e_xe_ye_z))=\Phi\left(\overbrace{xe_ye_z}^{I}-\overbrace{x'e_ye_z}^{II}-\overbrace{e_xye_z}^{III}+
   \overbrace{e_xy'e_z}^{IV}+\overbrace{e_xe_yz}^{V}-\overbrace{e_xe_yz'}^{VI} \right) \]
   
   If considering  $\Hom_{A'-A}$ then,using the relations defining $B$, the terms $I,III,IV$ and $VI$ changes leaving
   \[
     \partial \Phi(x,y,z)=\Phi (\sigma^1\!(x,y),\sigma^1\!(\sigma^2(x,y),z))-\Phi(y,z)-\Phi(x,\sigma^1\!(y,z))+\]
     \[
    \Phi(\sigma^2(x,y),z)+\Phi(x,y)-\Phi(\sigma^2(x,\sigma^1\!(y,z)),\sigma^2(y,z))
   \]

 \item If $M$ is a  $k[T]$-module (notice that $T$ need not to be invertible as in \cite{tCES}) then
 $M$ can be viewed as an $A'-A$-bimodule via
 \[
  x' \cdot m=m
,\ \ m\cdot x=Tm
 \]
  The actions are compatible with the relations defining $B$:
  
  \[
   (m\cdot x)\cdot y =T^2 m \ ,\ \ (m\cdot z)\cdot t =T^2 m \]
   and 
   \[
  x'\cdot (y'\cdot m)=m \  ,  \ \  z'\cdot (t'\cdot m)= m   
  \]
Using these coefficients we get twisted cohomology as in \cite{tCES} but for general YB solutions.
 If one takes the special case of $(X,\sigma)$ being a rack, namely $\sigma(x,y)=(y,x\t y)$, then the general formula
 gives
 \[
 \partial f(x_1,\dots,x_n)=f(d(e_{x_1}\dots e_{x_n}))=\]
\[
 \sum_{i=1}^{n} (-1)^{i+1} \left(Tf(x_1,\dots, x_{i-1},x_{i+1},\dots, x_n)-f({x_1}^{x_i},\dots, x_{i-1}^{x_i},x_{i+1},\dots, x_n)\right)
 \]
 that agree with the differential of the twisted cohomology defined in \cite{tCES}.

\end{ex}

\begin{rem} 
If $c(x\ot y)=f(x,y)\sigma^1\!(x,y)\ot \sigma^2(x,y)$, then $c$ is  a solution of YBeq if and only if $f$ is a 2-cocycle.
\end{rem}

\[
 c_1\circ c_2\circ c_1(x\ot y\ot z)=
\]

\[
 =a\overbrace{\sigma^1\!\left(\sigma^1\!(x,y),\sigma^1\!(\sigma^2(x,y),z)\right)\ot \sigma^2\left(\sigma^1\!(x,y),\sigma^1\!(\sigma^2(x,y),z\right)\ot \sigma^2\left(\sigma^2(x,y),z)\right)}^{I}
\]
where
\[
 a=f(x,y)f\left(\sigma^2(x,y),z\right)f\left(\sigma^1\!(x,y),\sigma^1\!(\sigma^2(x,y),z)\right)
\]

\[
 c_2\circ c_1 \circ c_2 (x\ot y\ot z)=
\]

\[
 b\overbrace{\sigma^1\!(x,\sigma^1\!(y,z))\ot \sigma^1\!\left(\sigma^2(x,\sigma^1\!(y,z)),\sigma^2(y,z)\right)\ot \sigma^2\left(\sigma^2(x,\sigma^1\!(y,z),\sigma^2(y,z))\right)}^{II}
\]
where
\[b= f(y,z)f\left(x,\sigma^1\!(y,z)\right)f\left(\sigma^2(x,\sigma^1\!(y,z)),\sigma^2(y,z)\right)
\]
 Writing YBeq with this notation leaves:
\begin{equation}\label{trenza-trenza}
\sigma \ is\ a\ braid \Leftrightarrow    I=II
\end{equation}

Take $f$ a two-cocycle, then 

\[
 0=\partial f(x,y,z)=f(d(e_xe_ye_z))=f((x-x')e_ye_z-e_x(y-y')e_z+e_xe_y(z-z'))
 \]

 is equivalent to the following equality
 \[
  f(xe_ye_z)+f(e_xy'e_z)+f(e_xe_yz)=f(x'e_ye_z)+f(e_xye_z)+f(e_xe_yz')
 \]
using the relations defining $B$ we obtain

\[
 f\left(e_{\sigma^1\!(x,y)}e_{\sigma^1\!(\sigma^2(x,y),z)}\sigma^2(\sigma^2(x,y)z)\right)+f\left(\sigma^1\!(x,y)'e_{\sigma^2(x,y)}e_z\right)
 +f\left(e_xe_yz\right)
      \]
\[
 =f\left(x'e_ye_z\right)+f\left(e_xe_{\sigma^1\!(y,z)}\sigma^2(y,z)\right)+
 f\left(\sigma^1\!(x,\sigma^1\!(y,z))'e_{\sigma^2(x,\sigma^1\!(y,z))}e_{\sigma^2(y,z)}\right)
\]

If $G$ is an abelian multiplicative group and $f:X\times X\rightarrow (G, \cdotp)$ then the previous formula says 
\[
 f\left(e_{\sigma^1\!(x,y)}e_{\sigma^1\!(\sigma^2(x,y),z)}\sigma^2(\sigma^2(x,y)z)\right)f\left(\sigma^1\!(x,y)'e_{\sigma^2(x,y)}e_z\right)
 f\left(e_xe_yz\right)
\]
\[
 =f\left(x'e_ye_z\right)f\left(e_xe_{\sigma^1\!(y,z)}\sigma^2(y,z)\right)
 f\left(\sigma^1\!(x,\sigma^1\!(y,z))'e_{\sigma^2(x,\sigma^1\!(y,z))}e_{\sigma^2(y,z)}\right)
\]
which is exactly the condition $a=b$.

Notice that if the action is trivial, then the equation above simplifies giving
\begin{equation}\label{2-cocycle}
  f\!\left(e_{\sigma^1\!(x,y)}e_{\sigma^1\!   (\sigma^2(x,y),z)} \right)\! 
f \!\left(e_{\sigma^2(x,y)}e_z\right)\!
 f\!\left(e_xe_y\right)\!
 \newline
 =f\!\left(e_ye_z\right)\!f\!\left(e_xe_{\sigma^1\!(y,z)}\right)\!
 f\!\left(e_{\sigma^2(x,\sigma^1\!(y,z))}e_{\sigma^2(y,z)}\right)
\end{equation}
which is precisely the formula on \cite{CES} for Yang-Baxter 2-cocycles 
 (with $R_1$ and $R_2$ instead of $\sigma^1$ and $\sigma^2$).

\section{1st application: multiplicative structure
on cohomology}

\begin{prop} $\Delta$ induces an associative product in $\Hom_{A'-A}(B,k)$ (the graded Hom).
\end{prop}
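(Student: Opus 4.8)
The plan is to recognize the product in question as the \emph{convolution product} attached to the coalgebra structure of $B$ and the (trivial) algebra structure of the coefficient ring $k$. Concretely, for $f,g\in\Hom_{A'-A}(B,k)$ I would set
\[
(f*g)(b)=\sum f(b_{(1)})\,g(b_{(2)}),\qquad \Delta(b)=\sum b_{(1)}\ot b_{(2)},
\]
the product on the right being ordinary multiplication in $k$. Because $\Delta$ is an algebra map carrying the degree-$0$ generators $x,x'$ to degree-$0$ elements and $e_x$ to $x'\ot e_x+e_x\ot x$ (total degree $1$), it preserves the grading of $B$; hence for $b\in B_{p+q}$ only the Sweedler terms with $|b_{(1)}|=p$ and $|b_{(2)}|=q$ survive, and $*$ restricts to a map $\Hom_{A'-A}(B_p,k)\ot\Hom_{A'-A}(B_q,k)\to\Hom_{A'-A}(B_{p+q},k)$, which is the degree bookkeeping required for the graded Hom.

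Associativity is then a formal consequence of the coassociativity of $\Delta$ (the Remark following Theorem \ref{teobialg}) together with associativity of multiplication in $k$. Expanding both sides in Sweedler notation gives
\[
((f*g)*h)(b)=\sum f(b_{(1)(1)})\,g(b_{(1)(2)})\,h(b_{(2)}),\qquad (f*(g*h))(b)=\sum f(b_{(1)})\,g(b_{(2)(1)})\,h(b_{(2)(2)}),
\]
and applying $(\Delta\ot\id)\Delta=(\id\ot\Delta)\Delta$ identifies both with $\sum f(b_{(1)})\,g(b_{(2)})\,h(b_{(3)})$.

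The one step that genuinely uses the structure at hand — and which I would regard as the crux — is verifying that $f*g$ is again $A'$-$A$-linear, so that $*$ really does restrict to the subspace $\Hom_{A'-A}(B,k)$. This rests on two facts: the generators $x,x'$ are grouplike, so that $\Delta(x'bx)=(x'\ot x')\,\Delta(b)\,(x\ot x)=\sum x'b_{(1)}x\ot x'b_{(2)}x$; and $k$ carries the \emph{trivial} $A'$-$A$-bimodule structure. Combining them,
\[
(f*g)(x'bx)=\sum f(x'b_{(1)}x)\,g(x'b_{(2)}x)=\sum f(b_{(1)})\,g(b_{(2)})=(f*g)(b),
\]
where the middle equality is the $A'$-$A$-linearity of $f,g$ followed by the triviality of the action on $k$. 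Since $A'$ and $A$ are generated by the grouplike elements $x',x$, checking linearity on these generators suffices. The counit $\varepsilon$ (grouplikes to $1$, $e_x$ to $0$), which is $A'$-$A$-linear for the same reason, then provides the two-sided unit in degree $0$.

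I do not expect a real obstacle here: the argument is the standard ``dual of a coassociative coalgebra is an associative algebra.'' The only care needed is the two routine alignments flagged above — confirming that the degree-preserving property of $\Delta$ places $f*g$ in the correct graded component, and confirming that the trivial-coefficient hypothesis is exactly what collapses the action of the grouplike elements so that $A'$-$A$-linearity is preserved. No Koszul signs intervene, since $k$ is concentrated in degree $0$ and the target multiplication is sign-free.
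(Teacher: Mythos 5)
Your proof is correct and is essentially the paper's own argument: the paper also takes the convolution product induced by $\Delta$ on the dual, and its two steps (that $\Delta:B\to B\ot^D B$ is a map of $A'$-$A$-bimodules for the diagonal structure, checked on the grouplike generators $x,x'$ and on $e_x$, and that $\iota(f\ot g)$ of two $A'$-$A$-linear maps is again $A'$-$A$-linear on the diagonal bimodule because $k$ carries the trivial action) are exactly the two facts you combine in your single Sweedler computation $(f*g)(x'bx)=\sum f(x'b_{(1)}x)\,g(x'b_{(2)}x)=(f*g)(b)$. The identification of the crux — bimodule-linearity of the product resting on grouplikeness of the degree-zero generators plus triviality of the coefficients — matches the paper precisely.
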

 \begin{proof} It is clear that $\Delta$ induces an associative product on
 $\Hom_{k}(B,k)$ (the graded Hom), and
 $\Hom_{A'-A}(B,k)\subset\Hom_k(B,k)$ is a $k$-submodule. We will show that it is in fact a subalgebra.

 Consider the $A'$-$A$ diagonal structure on $B\ot B$
(i.e. $x_1'.(b\ot b').x_2=x_1'bx_2\ot x_1'b'x_2$)
and denote $B\ot^DB$ the $k$-module $B\ot B$ considered as $A'-A$-bimodule in this diagonal way.
We claim that 
$\Delta: B\rightarrow B\ot^D B$ is a morphism of $A'-A$-modules:
\[
 \Delta(x_1'yx_2)=x_1'yx_2\ot x_1'yx_2=x_1'(y\ot y)x_2
\]
same with $y'$, and with $e_x$:
\[
 \Delta(x_1'e_yx_2)=(x_1'\ot x_1')(y'\ot e_y+e_y\ot y)(x_2\ot x_2)=x'_1\Delta(e_y)x_2
\]

Dualizing $\Delta$ one gets:
\[
\Delta^*:\Hom_{A'-A}(B\ot^DB, k)\rightarrow \Hom_{A'-A}(B,k)
\]
consider the natural map
$$\iota:\Hom_k(B,k)\ot \Hom_k(B,k)\rightarrow \Hom_k(B\ot B, k)$$
$$\iota(f\ot g)(b_1\ot b_2)=f(b_1)g(b_2)$$
and denote $\iota|$ by
\[
 \iota|=\iota|_{\Hom_{A'-A}(B,k)\ot \Hom_{A'-A}(B,k)}
\]
Let us see that 
\[
 Im (\iota|)\subset \Hom_{A'-A}(B\ot B, k)\subset \Hom_k(B\ot B, k)
\]
If  $f,g: B\rightarrow k$ are two $A'-A$-module morphisms (recall $k$ has trivial actions, i.e. $x'\lambda=\lambda$ and $\lambda x=x$),
then
\[
 \iota(f\ot g)(x'(b_1\ot b_2))=f(x'b_1)g(x'b_2)=(x'f(b_1))(x'g(b_2))\]
 \[=f(b_1)g(b_2)=x'\iota (f\ot g)(b_1\ot b_2)
\]
\[
 \iota(f\ot g)((b_1\ot b_2)x)=f(b_1x)g(b_2x)=(f(b_1)x)(g(b_2)x)
 \]
 \[=(f(b_1)g(b_2))x=\iota(f\ot g)(b_1\ot b_2)x
\]
So, it is possible to compose $\iota|$ and $\Delta$, and obtain in this way an associative multiplication in $\Hom_{A'-A}(B,k)$.
\end{proof}

Now we will describe several natural quotients of $B$, each of them
give rise to a subcomplex of the cohomological complex of $X$
with trivial coefficients that are not only subcomplexes but also subalgebras;
in particular they are associative algebras.

\subsection{Square free case}
A solution $(X,\sigma)$ of YBeq satisfying $\sigma(x,x)=(x,x)\forall x\in X$ is called {\em square free}. 
For instance, if $X$ is a rack, then this condition is equivalent to $X$ being a quandle.

In the square free situation,
namely when $X$ is such that $\sigma (x,x)=(x, x)$ for all $x$, we add the condition $e_xe_x\sim 0$.

 If $(X,\sigma)$ is a square-free solution of the YBeq,  let us denote
 $sf$ the two sided ideal of $B$ generated by $\{e_xe_{x}\}_{x\in X}$.
\begin{prop}
\label{propsf}
$sf$ is a differential Hopf ideal. More precisely, 
\[
d(e_xe_{x})=0
\hbox{  and }
\Delta(e_xe_{x})=x'x'\ot e_xe_{x}+
e_xe_{x}\ot xx.\]
\end{prop}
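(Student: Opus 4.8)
The plan is to reduce the ``differential Hopf ideal'' assertion to the two displayed identities, prove those by direct computation, and then deduce the ideal statement by a formal argument about derivations and algebra morphisms. Recall that ``differential Hopf ideal'' means that $sf$ is simultaneously a differential ideal ($d(sf)\subseteq sf$) and a coideal ($\Delta(sf)\subseteq sf\ot B+B\ot sf$) killed by the counit, so once the two formulas for $d(e_xe_x)$ and $\Delta(e_xe_x)$ are in hand the rest is bookkeeping.

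For $d(e_xe_x)$ I would specialize the low-degree formula $d(e_xe_y)=(e_zt-e_xy)-(x'e_y-z'e_t)$, already recorded in the Example above, to the case $y=x$. Square-freeness gives $\sigma(x,x)=(x,x)$, hence $z=t=x$, and each bracket collapses: $(e_xx-e_xx)-(x'e_x-x'e_x)=0$. Thus $d(e_xe_x)=0$.

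For the comultiplication I would use that $\Delta$ is an algebra morphism, so $\Delta(e_xe_x)=\Delta(e_x)\Delta(e_x)=(x'\ot e_x+e_x\ot x)(x'\ot e_x+e_x\ot x)$, the product being taken in the graded tensor product $B\ot B$ with the Koszul sign rule $(a\ot b)(c\ot d)=(-1)^{|b||c|}ac\ot bd$ used already in the proof of Theorem \ref{teobialg}. Expanding produces the four terms $x'x'\ot e_xe_x$, $-x'e_x\ot e_xx$, $e_xx'\ot xe_x$, and $e_xe_x\ot xx$, the single minus sign coming from $|e_x||e_x|=1$. The substance of the computation is that the two middle terms cancel: specializing the defining relations $xe_y\sim e_zt$ and $e_xy'\sim z'e_t$ to $y=x$ (again $z=t=x$) yields $xe_x=e_xx$ and $e_xx'=x'e_x$ in $B$, so that $e_xx'\ot xe_x=x'e_x\ot e_xx$ is exactly the negative of $-x'e_x\ot e_xx$. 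This is the step I expect to require the most care, since it is precisely where the sign bookkeeping of the graded product must be matched against the $B$-relations; getting either the Koszul sign or the specialized relation wrong would leave a spurious surviving term. What remains is $x'x'\ot e_xe_x+e_xe_x\ot xx$, as claimed.

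Finally I would assemble the ideal statement from the two formulas. Since $d$ is a superderivation with $d(e_xe_x)=0$, a generic element $a\,e_xe_x\,b$ of $sf$ satisfies $d(a\,e_xe_x\,b)=d(a)\,e_xe_x\,b\pm a\,e_xe_x\,d(b)\in sf$, whence $d(sf)\subseteq sf$. For the coideal property, observe that $sf\ot B+B\ot sf$ is a two-sided ideal of $B\ot B$ because $sf$ is a two-sided ideal of $B$; as $\Delta$ is an algebra morphism and $\Delta(e_xe_x)\in sf\ot B+B\ot sf$ by the formula just established, we get $\Delta(a\,e_xe_x\,b)=\Delta(a)\Delta(e_xe_x)\Delta(b)\in sf\ot B+B\ot sf$, so $\Delta(sf)\subseteq sf\ot B+B\ot sf$. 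Lastly $\varepsilon(e_x)=0$ (forced by applying the counit axiom to $\Delta(e_x)=x'\ot e_x+e_x\ot x$), hence $\varepsilon(sf)=0$. Together these establish that $sf$ is a differential Hopf ideal, so in particular $B/sf$ inherits the d.g. bialgebra structure.
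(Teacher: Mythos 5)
Your proposal is correct and follows essentially the same route as the paper: the paper proves Proposition \ref{propsf} as the $N=1$, $(x_0,y_0)=(x,x)$ instance of its general ``braids of order $N$'' computation, which is exactly your argument --- $d$ on the generator via the derivation property plus the $B$-relations, and $\Delta$ via multiplicativity with the Koszul sign rule, the two middle terms cancelling through the specialized relations $xe_x=e_xx$ and $e_xx'=x'e_x$. The only difference is that you also spell out the formal bookkeeping (that a generator satisfying these identities generates a differential Hopf ideal, and that $\varepsilon(e_x)=0$), which the paper leaves implicit.
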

In particular $B/sf$ is a differential graded bialgebra.
We may identify \\
$\Hom_{A'A}(B/sf,k)\subset 
\Hom_{A'A}(B,k)$ as the elements $f$ such that $f(\dots,x,x,\dots)=0$. If $X$ is a quandle, this
construction leads to the quandle-complex.
We have    $\Hom_{A'A}(B/sf,k)\subset 
\Hom_{A'A}(B,k)$ is not only a subcomplex, but also a subalgebra.

\subsection{Biquandles}

In \cite{KR}, a generalization of quandles is proposed (we recall it with different notation), 
a solution $(X,\sigma)$ is called
non-degenerated, or {\em birack} if in addition, 
\begin{enumerate}
 \item for any $x,z\in X$ there exists a unique $y$ such that $\sigma^1\!(x,y)=z$, (if this is the case,  $\sigma^1\!$ is called {\em left invertible}),
 \item for any $y,t\in X$ there exists a unique $x$ such that $\sigma^2(x,y)=t$, (if this is the case,  $\sigma^2$ is called {\em right invertible}),
\end{enumerate}
A birack is called {\em biquandle} if, given $x_0\in X$, there exists a unique $y_0\in X$ such that
$\sigma(x_0,y_0)=(x_0,y_0)$. In other words, if there exists a bijective map $s:X\to X$ such
that
\[
\{(x,y):\sigma(x,y)=(x,y)\}=
\{(x,s(x)): x\in X\}
\]

\begin{rem}
 Every quandle solution is a biquandle, moreover, given a rack $(X,\t)$, then
 $\sigma(x,y)=(y,x\t y)$ is a biquandle if and only if $(X,\t)$ is a quandle.  
\end{rem}

If $(X,\sigma)$ is a biquandle, for all $x\in X$ we add in $B$ the relation $e_{x}e_{s(x)}\sim 0$. 
Let us denote $bQ$ the two sided ideal of $B$ generated by $\{e_xe_{sx}\}_{x\in X}$.
\begin{prop}

\label{propbQ}
$bQ$ is a differential Hopf ideal. More precisely, 
$d(e_xe_{sx})=0$ and
$\Delta(e_xe_{sx})=x's(x)'\ot e_xe_{sx}+
e_xe_{sx}\ot xs(x)$.
\end{prop}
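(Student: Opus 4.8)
The plan is to reduce everything to a verification on the algebra generators $e_x e_{s(x)}$ of $bQ$, exactly as in the square-free case (Proposition \ref{propsf}), which is the special instance $s=\mathrm{id}$. Once the two displayed formulas are established on these generators, the statement that $bQ$ is a differential Hopf ideal follows formally: $d$ is a (super)derivation, so for $a,b\in B$ one has $d(a\,e_xe_{s(x)}\,b)=d(a)\,e_xe_{s(x)}\,b\pm a\,d(e_xe_{s(x)})\,b\pm a\,e_xe_{s(x)}\,d(b)$, and the middle term vanishes once $d(e_xe_{s(x)})=0$, so $d(bQ)\subseteq bQ$; and $\Delta$ is an algebra map, so $\Delta(a\,e_xe_{s(x)}\,b)=\Delta(a)\,\Delta(e_xe_{s(x)})\,\Delta(b)$ lands in the two-sided ideal $bQ\ot B+B\ot bQ$ of $B\ot B$ as soon as $\Delta(e_xe_{s(x)})$ does. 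The counit condition is immediate since $\varepsilon(e_x)=0$. Thus the whole content is the two computations on generators.

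For the differential, I would apply the super-Leibniz rule and $d(e_x)=x-x'$ to get
\[
d(e_xe_{s(x)})=(x-x')e_{s(x)}-e_x(s(x)-s(x)').
\]
The key input is the biquandle fixed-point property $\sigma(x,s(x))=(x,s(x))$, i.e. $(z,t)=(x,s(x))$. Feeding this into the defining relations $xe_y\sim e_zt$ and $e_xy'\sim z'e_t$ of $B$ yields the two identities $x\,e_{s(x)}=e_x\,s(x)$ and $e_x\,s(x)'=x'\,e_{s(x)}$. Substituting these, the four surviving terms cancel in pairs and $d(e_xe_{s(x)})=0$.

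For the comultiplication, I would expand $\Delta(e_xe_{s(x)})=\Delta(e_x)\Delta(e_{s(x)})$ using $\Delta(e_x)=x'\ot e_x+e_x\ot x$ and the graded multiplication of $B\ot B$, being careful with the Koszul sign $(-1)^{|b||c|}$ on $(a\ot b)(c\ot d)$; since $e$ has odd degree, the product of the two ``mixed'' cross terms acquires a sign $-1$. The outer terms give $x's(x)'\ot e_xe_{s(x)}+e_xe_{s(x)}\ot x\,s(x)$, while the two mixed terms are $-x'e_{s(x)}\ot e_x s(x)+e_x s(x)'\ot x\,e_{s(x)}$, and these cancel by the very same fixed-point identities $e_x s(x)'=x'e_{s(x)}$ and $x\,e_{s(x)}=e_x s(x)$. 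This leaves precisely the claimed formula, whose two summands lie in $B\ot bQ$ and $bQ\ot B$ respectively.

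The main obstacle is purely bookkeeping: correctly tracking the Koszul signs when expanding $\Delta$ as an algebra map into the graded tensor product, and applying the two fixed-point relations in the right places so that the mixed terms cancel. Everything else is formal, and the argument is a verbatim generalization of Proposition \ref{propsf}, with $s(x)$ playing the role that $x$ itself plays in the square-free case.
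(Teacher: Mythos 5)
Your proof is correct and takes essentially the same route as the paper: the paper proves Proposition \ref{propbQ} as the $N=1$ instance of its ``braids of order $N$'' computation, where for $\om=\sum_{i=0}^{N-1}e_{x_i}e_{y_i}$ the same super-Leibniz expansion, the same relations $x_{i-1}e_{y_{i-1}}\sim e_{x_i}y_i$ and $e_{x_{i-1}}y'_{i-1}\sim x'_ie_{y_i}$ (here $xe_{s(x)}\sim e_xs(x)$ and $e_xs(x)'\sim x'e_{s(x)}$), and the same Koszul-signed expansion of $\Delta$ make the middle terms cancel --- telescopically for general $N$, in a single pair in your fixed-point case. Your explicit generator computation plus the formal derivation/algebra-map argument for why killing the generators suffices is exactly that argument specialized to $\sigma(x,s(x))=(x,s(x))$.
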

In particular $B/bQ$ is a differential graded bialgebra.
We may identify
\[
\Hom_{A'A}(B/bQ,k)\cong 
\{f\in \Hom_{A'A}(B,k) : f(\dots,x,s(x),\dots)=0\} 
\subset 
\Hom_{A'A}(B,k)\]
In \cite{CES}, the condition $f(\dots,x_0,s(x_0),\dots)=0$ is
 called the {\em type 1 condition}. A consequence of the above proposition is that
  $\Hom_{A'A}(B/bQ,k)\subset 
\Hom_{A'A}(B,k)$ is not only a subcomplex, but also a subalgebra.
Before proving this proposition we will review some other similar constructions.

\subsection{Identity case}
The two cases above may be generalized in the following way:

Consider $S\subseteq X\times X$ a subset of elements verifying
$\sigma(x,y)=(x,y)$ for all $(x,y)\in S$. 
Define $idS$ the two sided ideal of $B$ given by $idS=\langle e_xe_y/ (x,y)\in S\rangle$.

\begin{prop}
\label{propidS}
$idS$ is a differential Hopf ideal. More precisely, 
$d(e_xe_{y})=0$ for all $(x,y)\in S$ and
$\Delta(e_xe_{y})=x'y'\ot e_xe_{y}+
e_xe_{y}\ot xy$.
\end{prop}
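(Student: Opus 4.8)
The plan is to reduce everything to the two displayed identities on the generators $e_xe_y$ with $(x,y)\in S$, since once these are verified the ideal statement propagates formally. The decisive input is that $(x,y)\in S$ means $\sigma(x,y)=(x,y)$, so the defining relations $xe_y=e_zt$ and $e_xy'=z'e_t$ collapse to the symmetric forms $xe_y=e_xy$ and $e_xy'=x'e_y$. Using that $d$ is a superderivation with $d(e_z)=z-z'$ and $|e_z|=1$, I would compute
\[
d(e_xe_y)=d(e_x)e_y-e_xd(e_y)=(x-x')e_y-e_x(y-y')=xe_y-x'e_y-e_xy+e_xy',
\]
and then substitute the two specialized relations to obtain $e_xy-x'e_y-e_xy+x'e_y=0$.

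For the comultiplication, recall that $\Delta$ is an algebra map into $B\otimes B$ equipped with the graded (Koszul) multiplication, so I would expand
\[
\Delta(e_xe_y)=\Delta(e_x)\Delta(e_y)=(x'\otimes e_x+e_x\otimes x)(y'\otimes e_y+e_y\otimes y),
\]
keeping track of the sign $(-1)^{|e_x||e_y|}=-1$ produced by commuting $e_x$ past $e_y$ in the mixed term. This yields
\[
x'y'\otimes e_xe_y-x'e_y\otimes e_xy+e_xy'\otimes xe_y+e_xe_y\otimes xy.
\]
Applying $e_xy'=x'e_y$ to the left leg and $xe_y=e_xy$ to the right leg of the third summand rewrites it as $x'e_y\otimes e_xy$, which cancels the second summand; what survives is exactly $x'y'\otimes e_xe_y+e_xe_y\otimes xy$, as claimed.

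With these base cases established, the remaining assertions are structural. A typical element of $idS$ is a sum of terms $a\,e_xe_y\,b$ with $a,b\in B$ and $(x,y)\in S$. For stability under $d$ I would apply the Leibniz rule
\[
d(a\,e_xe_y\,b)=d(a)\,e_xe_y\,b\pm a\,d(e_xe_y)\,b\pm a\,e_xe_y\,d(b),
\]
note that the middle term vanishes by $d(e_xe_y)=0$, and observe that the two surviving terms still contain the factor $e_xe_y$, hence lie in $idS$; thus $d(idS)\subseteq idS$. For the coideal property I would use multiplicativity of $\Delta$, so that $\Delta(a\,e_xe_y\,b)=\Delta(a)\,\Delta(e_xe_y)\,\Delta(b)$; substituting the comultiplication formula, each resulting summand carries the factor $e_xe_y$ in one of its two tensor legs, and since $idS$ is a two-sided ideal the products land in $idS\otimes B+B\otimes idS$. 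Finally the counit $\epsilon$ (with $\epsilon(x)=\epsilon(x')=1$, $\epsilon(e_x)=0$) is an algebra map, so $\epsilon(e_xe_y)=\epsilon(e_x)\epsilon(e_y)=0$ and hence $\epsilon(idS)=0$. Together these show $idS$ is a differential Hopf ideal, so $B/idS$ inherits the structure of a d.g. bialgebra.

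I do not expect a serious obstacle: the only genuine content sits in the two base-case computations of the first two paragraphs, and everything else follows from the derivation and algebra-morphism properties of $d$ and $\Delta$ together with the fact that $idS$ is two-sided. The propositions for the square-free case (\ref{propsf}) and the biquandle case (\ref{propbQ}) then become immediate specializations, taking $S=\{(x,x):x\in X\}$ and $S=\{(x,s(x)):x\in X\}$ respectively. The one point demanding care is the bookkeeping of the Koszul signs in $B\otimes B$, since $|e_x|=1$ is what produces the cancellation in the comultiplication formula; the analogous cancellation in $d(e_xe_y)$ relies only on the specialized relations and not on signs.
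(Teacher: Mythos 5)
Your proof is correct and follows essentially the same route as the paper: the paper establishes this proposition as the $N=1$ (``identity'') case of the general braids-of-order-$N$ computation, where the same Leibniz-rule expansion of $d(e_xe_y)$, the same Koszul-sign expansion of $\Delta(e_x)\Delta(e_y)$, and the same cancellation of the two mixed terms via the specialized relations $xe_y=e_xy$ and $e_xy'=x'e_y$ appear (there as a telescoping sum). The only difference is presentational: you verify the two generator identities directly and then spell out the formal propagation to the two-sided ideal, which the paper leaves implicit.
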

In particular $B/idS$ is a differential graded bialgebra.\\
If one identifies $\Hom_{A'A}(B/sf,k)\subset 
\Hom_{A'A}(B,k)$ as the elements $f$ such that
\[ f(\dots,x,y,\dots)=0 \ \forall (x,y)\in S\]
We have that   $\Hom_{A'A}(B/idS,k)\subset 
\Hom_{A'A}(B,k)$ is not only a subcomplex, but also a subalgebra.
 
 \subsection{Flip case}
 Consider the condition $e_xe_y+e_ye_x\sim 0$ for all pairs such that $\sigma(x,y)=(y,x)$. For such a pair $(x,y)$ we have 
 the equations $xy=yx$, $xy'=y'x$, $x'y'=y'x'$ and $xe_y=e_yx$. Note that there is no equation for $e_xe_y$. 
 The two sided ideal $D=\langle e_xe_y+e_ye_x:\sigma(x,y)=(y,x)\rangle$ is a differential and Hopf ideal.

Moreover, the following generalization is still valid:

\subsection{Involutive case}

Assume  $\sigma(x,y)^2=(x, y)$. This case is called {\em involutive} in \cite{ETS}.
Define $Invo$ the two sided ideal of $B$ given by $Invo=\langle e_xe_y+e_ze_t : (x,y)\in X,\sigma(x,y)=(z,t)\rangle$.

\begin{prop}
\label{propInvo}
$Invo$ is a differential Hopf ideal. More precisely, 
$d(e_xe_{y}+e_ze_t)=0$ for all $(x,y)\in X$ (with $(z,t)=\sigma(x,y)$) and
if $\om=e_xe_y+e_ze_t$ then
$\Delta(\om)=x'y'\ot \om+\om \ot xy$.
\end{prop}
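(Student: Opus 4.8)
The plan is to follow exactly the three-step template already used for Propositions \ref{propsf}, \ref{propbQ} and \ref{propidS}. First I check that each generator $\om=e_xe_y+e_ze_t$ (with $(z,t)=\sigma(x,y)$) satisfies $d(\om)=0$; then I compute $\Delta(\om)$ and recognize it inside $Invo\ot B+B\ot Invo$; finally I promote these two facts from generators to the whole ideal, using that $d$ is a derivation and $\Delta$ an algebra map.

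For the differential I would start from the degree-two formula already recorded in the text, namely $d(e_ae_b)=e_cd-e_ab-a'e_b+c'e_d$ whenever $\sigma(a,b)=(c,d)$, which follows from $d(e_a)=a-a'$, the graded Leibniz rule, and the relations $ae_b\sim e_cd$, $e_ab'\sim c'e_d$. Applied to $(a,b)=(x,y)$ this gives $d(e_xe_y)=e_zt-e_xy-x'e_y+z'e_t$. The involutive hypothesis $\sigma^2=\id$ is what makes the argument work: it forces $\sigma(z,t)=(x,y)$, so the same formula applied to $(a,b)=(z,t)$ reads $d(e_ze_t)=e_xy-e_zt-z'e_t+x'e_y$. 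Adding the two expressions, the four terms cancel in pairs and $d(\om)=0$.

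For the comultiplication I would expand $\Delta(e_ae_b)=\Delta(e_a)\Delta(e_b)$ in the graded tensor product, keeping the Koszul sign $(-1)^{|q||r|}$ when multiplying $p\ot q$ by $r\ot s$. This yields $\Delta(e_ae_b)=a'b'\ot e_ae_b-a'e_b\ot e_ab+e_ab'\ot ae_b+e_ae_b\ot ab$, and after applying $e_ab'\sim c'e_d$, $ae_b\sim e_cd$ the third summand becomes $c'e_d\ot e_cd$. Writing the two instances for $(x,y)$ and for $(z,t)$ (again using $\sigma(z,t)=(x,y)$) and adding them, the relations $x'y'\sim z't'$ and $xy\sim zt$ collapse the leading and trailing terms into $x'y'\ot\om$ and $\om\ot xy$, while the two ``cross'' terms $\mp x'e_y\ot e_xy$ and $\pm z'e_t\ot e_zt$ cancel against their partners. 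This produces exactly $\Delta(\om)=x'y'\ot\om+\om\ot xy$.

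Once the generators are handled the ideal statement is formal. Since $d(\om)=0$ and $d$ is a derivation, $d(a\om b)\in Invo$ for all $a,b\in B$, so $d(Invo)\subseteq Invo$; and since $\Delta$ is an algebra homomorphism with $\Delta(\om)\in Invo\ot B+B\ot Invo$, multiplying on both sides by $\Delta(a),\Delta(b)\in B\ot B$ and using that $Invo$ is two-sided gives $\Delta(Invo)\subseteq Invo\ot B+B\ot Invo$. I expect the only delicate point to be the bookkeeping: getting the Koszul signs in $\Delta(e_ae_b)$ right and, for every term, invoking the correct defining relation. The conceptual content is entirely in the fact that here $\sigma(z,t)=(x,y)$ rather than some unrelated pair, which is precisely what forces the $(x,y)$ and $(z,t)$ contributions to cancel.
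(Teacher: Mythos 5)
Your proof is correct and is essentially the paper's own argument: the paper establishes Proposition \ref{propInvo} as the $N=2$ case of the ``braids of order $N$'' computation, where $d\bigl(\sum_i e_{x_i}e_{y_i}\bigr)=0$ and $\Delta\bigl(\sum_i e_{x_i}e_{y_i}\bigr)=x_0'y_0'\ot\om+\om\ot x_0y_0$ follow from the graded Leibniz rule, the Koszul-sign expansion of $\Delta(e_ae_b)$, and telescoping cancellation via the relations in $B$. Your pairwise cancellations for $(x,y)$ and $(z,t)$ are exactly that telescoping specialized to $N=2$, and your final promotion from generators to the two-sided ideal is the same (implicit) formal step.
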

In particular $B/Invo$ is a differential graded bialgebra.
If one identifies \\ 
$\Hom_{A'A}(B/Invo,k)\subset 
\Hom_{A'A}(B,k)$ then
$\Hom_{A'A}(B/Invo,k)\subset \Hom_{A'A}(B,k)$ is not only a subcomplex, but a subalgebra.
 
\begin{conj}
$B/Invo$ is acyclic in positive degrees.
\end{conj}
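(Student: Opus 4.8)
The plan is to first make the complex explicit and recognize it as a braided Koszul complex. By Proposition~\ref{fnormal}, and since the generators of $Invo$ lie in the middle tensor factor, one gets an isomorphism of $A'$-$A$-bimodules
\[
B/Invo \;\cong\; A' \ot S \ot A, \qquad S = TE\big/\langle e_xe_y+e_ze_t : \sigma(x,y)=(z,t)\rangle ,
\]
where $S$ is the braided symmetric algebra of $(V,-\sigma)$ with $V=\mathrm{span}_k\{e_x:x\in X\}$; in the involutive case this is exactly the Nichols algebra $\B=\B(X,-\sigma)$, so that $B/Invo\cong A'\ot\B\ot A$ is precisely the complex whose characteristic-zero acyclicity is announced in the Introduction. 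Computing $H_0$ from $d(e_x)=x-x'$ yields $H_0\cong A$ via the identification $x'\sim x$, so the conjecture is equivalent to the statement that $(A'\ot S\ot A,\,d)$ is a resolution of $A$, i.e. is acyclic in positive degrees. In this description $d$ is a \emph{braided Koszul differential}: it contracts one $e$ at a time, turning $e_x$ into $x$ on the right minus $x'$ on the left and reordering the remaining letters by $\sigma$.

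Second, I would reduce to the flip case by a degeneration argument. When $\sigma$ is the flip, $A=k[X]$ is a polynomial algebra, $S=\Lambda V$ is the ordinary exterior algebra, and $(A'\ot\Lambda V\ot A,\,d)$ is the classical Koszul complex on the sequence $(x-x')_{x\in X}$ inside the polynomial ring $A'\ot A$. This sequence is regular (change coordinates to $x-x'$ and $x$), so the complex is exact in positive degrees \emph{in every characteristic}, with $H_0=A$. For a general involutive $\sigma$ I would equip $A$, $A'$, $S$, and hence $B/Invo$, with a filtration --- the Etingof--Schedler--Soloviev normal-form filtration on the structure monoid $M$ --- that is compatible with $d$ and whose associated graded recovers the flip data: $\gr A\cong k[X]$, $\gr S\cong\Lambda V$, and $\gr d$ equal to the classical Koszul differential. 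The standard spectral sequence of a filtered complex, exhaustive and bounded below in each homological degree, then propagates exactness of $\gr$ to exactness of $B/Invo$ in positive degrees.

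The main obstacle, and the reason this is only a conjecture, is the characteristic-free input needed to run the degeneration: one must know that $S=\B(X,-\sigma)$ has the \emph{expected} (exterior-algebra) Hilbert series, equivalently that $A$ is Koszul with $\gr A\cong k[X]$, in arbitrary characteristic. This is a PBW / diamond-lemma statement for the braided symmetric algebra. In characteristic zero it follows from the $S_n$-representation theory underlying $\sigma^2=\id$: the averaging idempotent $\tfrac1{n!}\sum_{w\in S_n}w$ splits $V^{\ot n}$ into symmetric and antisymmetric parts, which is the content of the characteristic-zero acyclicity of Proposition~\ref{propinvo}. In characteristic $p$ this idempotent is unavailable, $k[S_n]$ is not semisimple, and the Nichols algebra can acquire unexpected dimensions (the case $\mathrm{char}=2$ is already delicate). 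So the crux is a characteristic-free PBW theorem for involutive braided symmetric algebras; assuming non-degeneracy one can hope to import the ESS Gr\"obner/normal-form results to supply it and thereby complete the reduction, but without non-degeneracy even the size of $S$ is unclear, and this is exactly where I expect the difficulty to concentrate.
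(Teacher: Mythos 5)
Your proposal is honest that it does not prove the statement, and rightly so: in the paper this is a \emph{conjecture}, with no general proof offered; the paper's support for it consists of the flip and identity examples and of Proposition \ref{propinvo}, which settles only the case $\Q\subseteq k$. Your first step is sound and consistent with the paper: by Proposition \ref{fnormal} and the stability of the generators $e_xe_y+e_ze_t$ under the crossing relations (Proposition \ref{propInvo}), one gets $B/Invo\cong A'\ot S\ot A$ with $S=TE/\langle e_xe_y+e_ze_t\rangle$ and $H_0\cong A$, in line with Theorem \ref{teoid} and the Question closing the paper. But for the part that \emph{can} be proved, $\Q\subseteq k$, your route is both much heavier and, as written, circular: you invoke Proposition \ref{propinvo} itself as ``the content'' of the characteristic-zero PBW input your argument needs. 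The paper instead proves that proposition by a short, self-contained homotopy argument: the super-derivation $h$ on $B/Invo$ (it is \emph{not} well defined on $B$) with $h(e_x)=0$, $h(x)=e_x$, $h(x')=-e_x$, for which $hd+dh$ is the Euler derivation of the grading $\deg e_x=\deg(x-x')=2$, $\deg(x+x')=0$; since $hd+dh$ is null-homotopic by construction yet acts by invertible scalars in positive degree when $\Q\subseteq k$, all positive-degree homology vanishes. No Nichols-algebra quadraticity or $S_n$-representation theory is required.

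Beyond the PBW gap you do flag, there is a second, unacknowledged gap that I think is the real obstruction in your plan: the degeneration step. All defining relations of $A$, $A'$, $S$, and $B/Invo$ are \emph{homogeneous} quadratic in the generators, so any filtration obtained by assigning weights to generators has associated graded equal either to the same algebra (equal weights on the two sides of each relation) or to a monomial algebra (unequal weights) --- never the commutative polynomial ring $k[X]$, unless $\sigma$ was the flip to begin with. What the Etingof--Schedler--Soloviev / Gateva-Ivanova--Van den Bergh theory provides (and only for \emph{non-degenerate} involutive solutions) is a degree-preserving set bijection $M\cong\N^{(X)}$, hence equality of Hilbert series and Koszulity of $A$; it does not provide an algebra isomorphism $\gr A\cong k[X]$ for any filtration, let alone one compatible with $d$ for which $\gr d$ is the classical Koszul differential. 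To run your spectral sequence you would need something like a flat one-parameter family of solutions joining $\sigma$ to the flip together with flatness of the associated algebras $S$, and no such construction is given or known. So your program has two missing ingredients, not one; the paper sidesteps both in characteristic zero via the explicit homotopy, and the general case remains genuinely open --- which is exactly why the paper states it as a conjecture.
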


\begin{ex} If $\sigma=flip$ and $X=\{x_1,\dots,x_n\}$ then $A=k[x_1,\dots,x_n]=SV$, the symmetric algebra on 
$V=\oplus _{x\in X}kx$. In this case
$(B/Invo,d)\cong (S(V)\ot\Lambda V\ot S(V),d)$ gives the Koszul resolution of $S(V)$ as
$S(V)$-bimodule.
\end{ex}

\begin{ex} If $\sigma=Id$,  $X=\{x_1,\dots,x_n\}$
and $V=\oplus _{x\in X}kx$, then $A=TV$ the tensor algebra.
If $\frac12\in k$, then
$(B/invo,d)\cong TV\ot (k\oplus V)\ot TV$ gives the Koszul resolution
of $TV$ as $TV$-bimodule. Notice that we don't really need $\frac12\in k$,
one could replace $invo=\langle e_xe_y+e_xe_y:(x,y)\in X\times X\rangle$ by
 $idXX=\langle e_xe_y:(x,y)\in X\times X\rangle$.
 \end{ex}

The conjecture above, besides these examples, is supported by
next result:
\begin{prop}\label{propinvo}
If $\Q\subseteq k$, then $B/Invo$ is acyclic in positive degrees.
\end{prop}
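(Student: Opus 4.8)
The plan is to prove that $(B/Invo,d)$ is a free resolution of $A$ as an $A'$-$A$-bimodule; since $B/Invo \cong A'\ot (TE/Invo)\ot A$ is concentrated in the $e$-degree, acyclicity in positive degrees is exactly this statement, the $0$-th homology being $A$ via $d(e_x)=x-x'$. By Proposition~\ref{fnormal} together with the involutive relations $e_xe_y+e_ze_t\sim 0$, I first identify $B/Invo\cong A'\ot \B\ot A$ as $A'$-$A$-bimodules, where the middle factor $TE/Invo$ is precisely the Nichols-type algebra $\B$ of $(X,-\sigma)$ from the introduction (the braided exterior algebra), and the homological degree is the $\B$-degree. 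Thus the complex consists of relatively free $A'$-$A$-bimodules, and the two displayed examples $\sigma=\mathrm{flip}$ and $\sigma=\id$ (where $\B$ is $\Lambda V$ and $k\oplus V$) exhibit it as the classical Koszul bimodule resolution; the task is to run the char-zero Koszul argument in the braided setting.

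Next I would split the differential as $d=d_r-d_\ell$, where $d_r$ is the sum $I$ (each $e_{x_i}$ is straightened to the right and emitted as a grouplike $x_i\in A$) and $d_\ell$ is the sum $II$ (emitted to the left as $x_i'\in A'$); both lower the $\B$-degree by one. To build a contracting homotopy I introduce insertion operators $h_r,h_\ell$ formally inverse to $d_r,d_\ell$: using the defining relations of $B$, $h_r$ takes the grouplike letter of the $A$-factor adjacent to the $\B$-block, braids it to the boundary by powers of $\sigma$, and reincorporates it as an $e$-generator, and symmetrically $h_\ell$ acts on the $A'$-factor. I then set $h=h_r-h_\ell$ (up to signs) as a map $A'\ot \B^{n}\ot A\to A'\ot \B^{n+1}\ot A$ and compute $dh+hd$.

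The heart of the argument is to show that $dh+hd$ equals a scalar operator $N$ acting on each multidegree as multiplication by a positive integer, namely the braided analogue of the Euler/Cartan homotopy formula: $N$ is the sum of the homological degree and the relevant word-lengths, and in particular $N\geq n>0$ in homological degree $n>0$. Both hypotheses enter here. The Yang--Baxter equation makes the straightening of a letter through the block well defined independently of the sequence of braid moves, which is exactly the content of Proposition~\ref{fnormal} and guarantees $h$ is well defined on $B/Invo$. Involutivity $\sigma^2=\id$ guarantees that extracting a generator and then reinserting it (or the reverse) returns the original monomial with coefficient $+1$, so the diagonal composites telescope to the clean count $N$. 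Since $N$ is a positive integer in positive homological degrees and $\Q\subseteq k$, the operator $N$ is invertible there, so $N^{-1}h$ is a contracting homotopy and $H_n(B/Invo)=0$ for $n>0$.

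The main obstacle is this identity $dh+hd=N$. For a general involutive braiding the four operators $d_r,d_\ell,h_r,h_\ell$ do not commute naively, and each composite expands into a sum over positions decorated with braiding factors produced by moving letters past the $\B$-block. One must verify that the diagonal terms $d_rh_r+h_rd_r$ and $d_\ell h_\ell+h_\ell d_\ell$ assemble into the integer $N$ while all mixed terms, such as $d_rh_\ell+h_\ell d_r$, cancel. It is precisely the cancellation of the mixed terms that forces the use of $\sigma^2=\id$: without involutivity the emitted-left and emitted-right contributions no longer match under a single application of $\sigma$, the count degenerates into an uncontrolled operator, and the homotopy breaks down, which is consistent with the surrounding conjecture being left open for non-involutive solutions.
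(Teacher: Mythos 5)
Your overall strategy --- produce a degree-raising operator $h$ on $B/Invo$ such that $dh+hd$ is an Euler-type operator, then invert it in positive degrees using $\Q\subseteq k$ --- is exactly the strategy of the paper's proof. But there is a genuine gap: the identity $dh+hd=N$, which you yourself call ``the main obstacle,'' is never proved, and for the operator you actually describe it is \emph{false}. Your $h_r$ inserts only the single letter of the $A$-factor adjacent to the block. Test this in your own model case: $X=\{x\}$, $\sigma=\mathrm{flip}$, so $B/Invo\cong k[x']\ot\Lambda(e_x)\ot k[x]$ is the Koszul resolution of $k[x]$. For $\om=1\ot e_x\ot x^m$ with $m\geq 1$ one gets $d\om=1\ot x^{m+1}-x'\ot x^m$, and with $h=h_r-h_\ell$ a direct computation gives $(dh+hd)(\om)=2\om-x'\ot e_x\ot x^{m-1}$, which is not a scalar multiple of $\om$; no sign convention repairs this. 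The conceptual reason is that boundary-insertion homotopies are adapted to bar-type complexes (where they give $dh+hd=\id$), while Euler/Cartan identities require $h$ to be a \emph{derivation}, i.e.\ a signed sum of insertions over \emph{all} letters of the outer factors. In addition, single-letter insertion is not even well defined on $A'\ot\B\ot A$: if $xy=zt$ in $A$ with $\sigma(x,y)=(z,t)$, extracting the adjacent letter yields either $e_xy$ or $e_zt$, and these are distinct normal forms in $B/Invo$ (the relation in $B$ is $xe_y=e_zt$, not $e_xy=e_zt$). This well-definedness is precisely where involutivity must be used, not in the ``telescoping'' you invoke.

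The paper's proof fixes both problems with one structural choice: $h$ is defined as the unique superderivation of $B/Invo$ with $h(e_x)=0$, $h(x)=e_x$, $h(x')=-e_x$. Then (i) well-definedness reduces to a finite check on the defining relations, and this is where both hypotheses enter: $h(xy-zt)=e_xy+xe_y-e_zt-ze_t$ vanishes because $xe_y=e_zt$ and, by $\sigma^2=\id$, $ze_t=e_xy$; while $h(xe_y-e_zt)=e_xe_y+e_ze_t$ vanishes only modulo $Invo$ (so $h$ does not exist on $B$ itself); (ii) $[h,d]=hd+dh$ is a commutator of superderivations, hence a derivation, so it is determined by its values on generators: $e_x\mapsto 2e_x$, $x+x'\mapsto 0$, $x-x'\mapsto 2(x-x')$; thus it is the Euler derivation for the grading $\deg e_x=\deg(x-x')=2$, $\deg(x+x')=0$, which $d$ preserves; (iii) every element of positive $e$-degree has positive grading-degree, so over $\Q\subseteq k$ the Euler operator is invertible there and the homology vanishes. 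If you replace your boundary insertion by this derivation, your problematic identity becomes a two-line computation on generators and your plan turns into the paper's proof.
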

\begin{proof}
In $B/Invo$ it can be defined $h$ as the unique  (super)derivation such that:
\[
 h(e_x)=0; h(x)=e_x, h(x')=-e_x
\]
Let us see that $h$ is well defined:
\[
 h(xy-zt)=e_xy+xe_y-e_zt-ze_t=0
\]
\[
 h(xy'-z't)=e_xy'-xe_y+e_zt-z'e_t=0
\]
\[
 h(x'y'-z't')=-e_xy'-x'e_y+e_zt'+z'e_t=0
\]
\[
 h(xe_y-e_zt)=e_xe_y+e_ze_t=0
\]
Notice that in particular next equation shows
 that $h$ is not well-defined in $B$.
\[
 h(e_xy'-z'e_t)=e_xe_y+e_ze_t=0
\]
\[
 h(zt'-x'y)=e_zt'-ze_t+e_xy-x'e_y=0
\]
\[
h(ze_t-e_xy)= e_ze_t+e_xe_y=0
\]
\[
 h(e_zt'-x'e_y)=e_ze_t+e_xe_y=0
\]
\[
 h(e_xe_y+e_ze_t)=0
\]
Since (super) commutator of (super)derivations is again a derivation,
we have that $[h, d]=hd+dh$ is also a derivation. 
 Computations on generators: 
\[
h(e_x)=2e_x,\  h(x)=x-x', \ h(x')=x'-x
\]
or equivalently
\[
h(e_x)=2e_x,\  h(x+x')=0, \ h(x-x')=2(x-x')
\]
One can also easily see that $B/Invo$ is generated by
$e_x,x_{\pm}$, where $x_\pm=x\pm x'$, and that their
relations are homogeneous. We see that $hd+dh$ is nothing but the Euler
derivation with respect to the grading defined by
\[
\deg e_x=2,\
\deg x_+=0,\
\deg x_-=2,\]
We conclude automatically that the homology vanish for positive degrees of the $e_x$'s
(and similarly for the  $x_-$'s).
\end{proof}

 Next, we generalize Propositions 
 \ref{propsf}, \ref{propbQ}, 
 \ref{propidS} and \ref{propInvo}.
 
\subsection{Braids of order $N$}
 
 Let $(x_0,y_0)\in X\times X$ such that 
 $\sigma^N(x_0,y_0)=(x_0,y_0)$ for some $N\geq 1$.
 If $N=1$ we have the ''identity case'' and all  subcases, if $N=2$ we have the 
''involutive case''.
Denote
 \[
 (x_i,y_i):=\sigma^i(x_0,y_0) \ 1\leq i \leq N-1 
 \]
 Notice that the following relations hold in $B$:
\begin{itemize}
\item[$\star$] $x_{N-1}y_{N-1}\sim x_0y_0$, \ $x_{N-1}y'_{N-1}\sim x'_0y_0$, \  $x'_{N-1}y'_{N-1}=x'_0y'_0$
\item[$\star$] $ x_{N-1}e_{y_{N-1}}\sim e_{x_0}y_0$, \ $ e_{x_{N-1}}y'_{N-1}\sim x'_0e_{y_0}$
\end{itemize}
  and for $1\leq i \leq N-1$:
 \begin{itemize}
\item[$\star$] $x_{i-1}y_{i-1}\sim x_iy_i$, \  $x_{i-1}y'_{i-1}\sim x'_iy_i$, \  $x'_{i-1}y'_{i-1}=x'_iy'_i$
\item[$\star$] $ x_{i-1}e_{y_{i-1}}\sim e_{x_i}y_i$, \  $ e_{x_{i-1}}y'_{i-1}\sim x'_ie_{y_i}$
\end{itemize} 
Take $\om=\sum_{i=0}^{N-1}e_{x_i}e_{y_i}$, then we claim that 
\[
d\om=0\]
and 
\[\Delta\om
=x_0y_0\ot\om+\om\ot x'_0y'_0\]
For that, we compute
\[
 d(\om)=\sum_{i=0}^{N-1}(x_i-x'_i)e_{y_i}-e_{x_i}(y_i-y'_i)=
\]
\[
 \sum_{i=0}^{N-1}(x_ie_{y_i}-e_{x_i}y_i)-  \sum_{i=0}^{N-1} (x'_ie_{y_i}-e_{x_i}y'_i)=0
\]
For the comultiplication,
we recall that 
\[\Delta(ab)=\Delta(a)\Delta(b)\]
 where the product on the right hand side is defined using the Koszul sign rule:
 \[
 (a_1\ot a_2)(b_1\ot b_2)=(-1)^{|a_2||b_1|}a_1b_1\ot a_2b_2
 \]
So, in this case we have
\[
 \Delta(\om)=\sum_{i=0}^{N-1}\Delta (e_{x_i}e_{y_i})=
\]
\[
\sum_{i=0}^{N-1}(x'_iy'_i\ot e_{x_i}e_{y_i}-x'_ie_{y_i}\ot e_{x_i}y_i+ e_{x_i}y'_i\ot x_ie_{y_i}+e_{x_i}e_{y_i}\ot x_iy_i)
\]
the middle terms cancel telescopically, giving
\[
=\sum_{i=0}^{N-1}(x'_iy'_i\ot e_{x_i}e_{y_i}+e_{x_i}e_{y_i}\ot x_iy_i)
\]
and the relation $x_iy_i\sim x_{i+1}y_{i+1}$ gives
\[
=x'_0y'_0\ot( \sum_{i=0}^{N-1}e_{x_i}e_{y_i})+
(\sum_{i=0}^{n-1}e_{x_i}e_{y_i})\ot x_0y_0
\]
\[
=x'_0y'_0\ot \om+
\om\ot x_0y_0
\]
Then the two-sided ideal of $B$ generated by $\om$ is a Hopf ideal.
If instead of a single $\om$ we have several $\om_1,\dots \om_n$, we simply remark that
the sum of differential Hopf ideals is also a differential Hopf ideal.

\begin{rem} If X, is finite then for every $(x_0,y_0)$ there exists $N>0$ such that
$\sigma^N(x_0,y_0)=(x_0,y_0)$.
\end{rem}

\begin{rem}
Let us suppose $(x_0,y_0)\in X\times X$ is such that $\sigma^N(x_0,y_0)=(x_0,y_0)$ and  $u\in X$ an arbitrary element.
Consider the element 
\[
((\id \times \sigma)( \sigma \times \id) (u,x_0,y_0)=(\wt x_0,\wt y_0,u'')
\]
graphically
\[
 \xymatrix{
 u\ar[rd]&x\ar[ld]&y\ar[d]\\
 \wt x\ar[d]&u'\ar[rd]&y\ar[ld]\\
 \wt x&\wt y&\wt u'' 
 }
\]
then $\sigma^N(\wt x_0,\wt y_0)=(\wt x_0,\wt y_0)$.
\end{rem}
\begin{proof}\[
              (\sigma^N \times id)(\wt x_0,\wt y_0,u'')=(\sigma^N\times id)(id\times \sigma)(\sigma \times id)(u,x_0,y_0)=
             \]
\[
 (\sigma^{N-1}\times id)(\sigma\times id)(id\times \sigma)(\sigma \times id)(u,x_0,y_0)=
\]
using YBeq
\[
  (\sigma^{N-1}\times id)(id\times \sigma)(\sigma \times id)(id\times \sigma)(u,x_0,y_0)=
\]

repeating the procedure $N-1$ times leaves
\[
(id\times \sigma)(\sigma \times id)(id\times \sigma^N)(u,x_0,y_0)=(id\times \sigma)(\sigma \times id )(u,x_0,y_0)=(\wt x_0,\wt y_0,u'')
\]

\end{proof}

\section{$2^{nd}$ application: Comparison with Hochschild cohomology}

$B$ is a differential graded algebra, and on each degree $n$
it is isomorphic to $A\ot (TV)_n\ot A$, where $V=\oplus_{x\in X}ke_x$.
In particular  $B_n$
is free as $A^e$-module. We 
have {\em for free} the existence of a comparison map
\[
\xymatrix@-2ex{
\cdots\ar[r]&B_n\ar[r]\ar@{=}[d]&\cdots\ar[r]&B_2\ar[r]^d\ar@{=}[d]&B_1\ar[r]^d\ar@{=}[d]&\ar@{=}[d]B_0\\
\cdots\ar[r]&A'(TX)_n A\ar@{=}[d] ^{\cong}\ar[r]&\cdots\ar[r] &\oplus_{x,y\in X}A'e_xe_y A\ar@{=} ^{\cong}[d]\ar[r]^d&\oplus_{x\in X} A'e_x A\ar[r]^d\ar@{=}[d]&A' A\ar@{=}[d] ^{\cong}\\
\cdots\ar[r]&A\ot V^{\ot n}\ot A\ar[d]^{\wt\id}\ar[r]&\cdots\ar[r] &A\ot V^{\ot 2}\ot  A\ar[d]^{\wt\id}\ar[r]^{d_2}& A\ot V\ot A\ar[d]^{\wt\id}\ar[r]^{d_1}&A\ot A\ar[d]^{\id}\ar[r]^m& A\ar[d]^{\id}\ar[r]&0\\
\cdots\ar[r]&A\ot A^{\ot n}\ot A\ar[r]& \cdots\ar[r] &A\ot A^{\ot 2}\ot  A\ar[r]^{b'}& A\ot A\ot A\ar[r]^{b'}&A\ot A\ar[r]^m& A\ar[r]&0\\
}\]

\begin{coro}
For all $A$-bimodule $M$, there exists natural maps

\[
\wt\id_*: H^{YB}_\bullet(X,M)\to H_\bullet(A,M)
\]
\[
\wt\id^*: H^\bullet(A,M)\to H_{YB}^\bullet(X,M)
\]
that are the identity in degree zero and 1.
\end{coro}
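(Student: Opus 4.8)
The strategy is the one-sided Comparison Theorem of homological algebra; the phrase ``for free'' in the text refers precisely to this. By Proposition \ref{fnormal} each $B_n$ is, as an $A'$-$A$-bimodule, isomorphic to $A'\ot (TV)_n\ot A$ with $V=\oplus_{x\in X}ke_x$; identifying $A'\cong A$ this presents $B_n$ as a \emph{free} $A^e$-module, $A^e=A\ot A^{\mathrm{op}}$. The multiplication $m\colon B_0=A'\ot A\to A$ augments $(B_\bullet,d)$, and $m\circ d_1=0$ since $d(e_x)=x-x'$ corresponds to $1\ot x-x\ot 1\in A'\ot A$, which $m$ annihilates. Hence $(B_\bullet,d)\to A$ is a complex of projective $A^e$-modules augmented over $A$. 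I emphasise that I do \emph{not} claim $B_\bullet$ to be exact: this is exactly the content of the preceding conjecture and is not needed here. By contrast the bar complex $\mathrm{Bar}_\bullet=A\ot A^{\ot\bullet}\ot A$ is a genuine free $A^e$-resolution of $A$.

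First I would lift $\id_A$ to an $A^e$-linear chain map $\wt{\id}\colon B_\bullet\to\mathrm{Bar}_\bullet$ by the Comparison Theorem. I set $\wt{\id}_0=\id_{A\ot A}$ and $\wt{\id}_1(1\ot e_x\ot 1)=1\ot x\ot 1$, and check the compatibility $b'\circ\wt{\id}_1=\wt{\id}_0\circ d_1$ directly from $d_1(1\ot e_x\ot 1)=1\ot x-x\ot 1$ (matching the sign convention chosen for $b'$). For $n\ge 2$ the component $\wt{\id}_n$ is produced inductively: $\wt{\id}_{n-1}\circ d_n$ takes values in $\ker b'_{n-1}=\mathrm{im}\,b'_n$ because $b'_{n-1}\wt{\id}_{n-1}d_n=\wt{\id}_{n-2}d_{n-1}d_n=0$, and then freeness of $B_n$ together with exactness of $\mathrm{Bar}_\bullet$ furnishes a lift $\wt{\id}_n$ with $b'\wt{\id}_n=\wt{\id}_{n-1}d_n$. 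This is the vertical map $\wt{\id}$ of the diagram; it is unique up to $A^e$-linear chain homotopy.

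The two comparison maps then come out by the usual functorial machine. Applying $M\ot_{A^e}(-)$ to $\wt{\id}$ produces a chain map from the Yang--Baxter complex $M\ot_{A^e}B_\bullet$ (whose homology is $H^{YB}_\bullet(X,M)$, cf.\ Theorem \ref{teocomplejo}) to the Hochschild complex $M\ot_{A^e}\mathrm{Bar}_\bullet$, and hence $\wt{\id}_*\colon H^{YB}_\bullet(X,M)\to H_\bullet(A,M)$ on homology. Dually, $\Hom_{A^e}(-,M)$ reverses the arrow and gives $\wt{\id}^*\colon H^\bullet(A,M)\to H^\bullet_{YB}(X,M)$. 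Since $\wt{\id}$ is independent of $M$, functoriality of $\ot$ and $\Hom$ makes both maps natural in $M$; and as $\wt{\id}$ is the identity in degree $0$ and the inclusion $e_x\mapsto x$ in degree $1$, the induced maps are the identity there.

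The point I would watch most carefully is conceptual rather than computational. Because $B_\bullet$ is not a resolution, no uniqueness-of-resolutions argument is available; only the one-sided Comparison Theorem applies, and it asks merely that the \emph{source} be an augmented complex of projectives (true) and the \emph{target} be a resolution (true of the bar complex). Consequently $\wt{\id}$ exists but is in general neither a quasi-isomorphism nor canonical on the nose---only up to chain homotopy. I would therefore record explicitly that homotopic $A^e$-linear chain maps induce equal maps after $M\ot_{A^e}(-)$ and $\Hom_{A^e}(-,M)$, so that $\wt{\id}_*$ and $\wt{\id}^*$ are well defined independently of the inductive choices. The only actual computations, $m\circ d_1=0$ and the degree-one compatibility, are immediate.
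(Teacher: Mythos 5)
Your proposal is correct and follows essentially the same route as the paper: the paper's phrase ``for free'' is precisely the one-sided comparison argument you spell out, using Proposition \ref{fnormal} to see each $B_n$ as a free $A^e$-module augmented over $A$ via multiplication, lifting $\id_A$ inductively against the exact bar resolution, and then applying $M\ot_{A^e}(-)$ and $\Hom_{A^e}(-,M)$ together with homotopy-invariance to get well-defined natural maps. Your explicit caveat that $B_\bullet$ is not claimed to be exact, so only the source-projective/target-exact version of the comparison theorem applies, is exactly the right reading of the paper's construction.
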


Moreover, one can choose an explicit map with extra properties. For that we recall some definitions: there is a set theoretical section to the canonical projection  from the
Braid group to the symmetric group
\[
\xymatrix{
\BB_n\ar@{->>}[r]& \SS_n\ar@/_/@{..>}[l]
}
\]
\[
\xymatrix{
T_s:=\sigma_{i_1}\dots\sigma_{i_k}
&
 s=\tau_{i_1}\dots \tau_{i_k} \ar@{|->}[l]
}
\]
where 
\begin{itemize}
 \item $\tau\in S_n$ are transpositions of neighboring elements $i$ and $i+1$, 
so-called simple transpositions,
 \item $\sigma_i$ are the corresponding generators of $\BB_n$,
 \item $\tau_{i_{1}}\dots \tau_{i_{k}}$ is one of the shortest words representing $s$.
\end{itemize}
This inclusion factorizes trough 
\[
 \SS_n\hookrightarrow \BB_n^+\hookrightarrow  \BB_n
 \]
 It is a set inclusion not preserving the monoid structure.
 
\begin{defi}
The permutation sets
\[
 \Sh_{p_1,\dots,p_k}:=\left\{ s\in \SS_{p_1+\dots+p_k}/s(1)<\dots<s(p_1),\cdots, s(p+1)<\dots < s(p+p_k) \right\},
\]
where $p=p_1+\dots+p_{k-1}$,
are called {\em shuffle sets}.
\end{defi}

\begin{rem}
It is well known that  a braiding $\sigma$ gives an action of the positive braid monoid $B_n^+$ on $V^{\ot n}$, i.e. a monoid morphism
\[
 \rho: B_n^+\rightarrow End_\K(V^{\ot n})
\]
defined on generators $\sigma_i$ of $B_n^+$ by
\[
 \sigma_i\mapsto \id_V^{\ot(i-1)}\ot \sigma\ot \id_V^{\ot(n-i+1)}
\]

Then there exists a natural extension of a braiding in $V$ to a braiding in $T(V)$.

\[
 {\bf \sigma}(v\ot w)=(\sigma_k \dots \sigma_1)\circ\dots\circ(\sigma_{n+k-2}\dots\sigma_{n-1})\circ(\sigma_{n+k-1}\dots\sigma_n)(vw)\in V^{k}\ot V^{n}
\]
for $v\in V^{\ot n}$, $w\in V^{k}$ and $vw$ being the concatenation. 

Graphically

\[
\xymatrix{
\ar[rrrrrdd]&\ar[rrrrrdd]&\dots&\ar[rrrrrdd]&\ot&\ar[llllldd]&\ar[llllldd]&\dots&\ar[llllldd]\\
&&&&&&&&\\
&&\dots&&\ot&&&\dots&
}\]
\end{rem}

\begin{defi}
 The quantum shuffle multiplication on the tensor space $T(V)$ of a braided vector space $(V,\sigma)$ is the $k$-linear extension 
 of the map 
\[
 \shuffle_\sigma
=
\shuffle_\sigma^{p,q}:V^{\ot p}\otimes V^{\otimes q}\rightarrow V^{\ot(p+q)} 
\]
 \[
\ra v\ot \ra w \mapsto 
\ra v \shuffle_\sigma\ra w :=\sum_{s\in Sh_{p,q}}T_s^\sigma(\ra{vw})
 \]
Notation: $T_s^{\sigma}$ stands for the lift $T_s\in \BB_n^+$     
acting on $V^{\ot n}$ via the braiding $\sigma$. 
The algebra $Sh_\sigma(V):=(TV,\shuffle_\sigma)$ 
is called the quantum shuffle algebra on $(V,\sigma)$
\end{defi}
It is well-known that 
$\shuffle_\sigma$ is an associative product on $TV$(see for example
\cite{Le} for details) that makes it a Hopf algebra with deconcatenation
coproduct. 

\begin{defi}
Let $V$ be a braided vector space, then the quantum symmetrizer
map $\shuffle_{\sigma}:V^{\ot n}\to V^{\ot n}$ defined by
\[
QS_{\sigma}(v_1\otimes\cdots  \ot v_n)=
\sum_{\tau\in \SS_n}T^\sigma_\tau(v_1\ot\cdots\ot v_n)
\]
 where  $T_\tau^\sigma$ is the lift $T^\sigma_\tau\in \BB_{n}^+$ 
of $\tau$, acting on $V^{\otimes n}$ via the braiding $\sigma$.

In terms of shuffle products the quantum symmetrizer can be computed as
\[
\om \shuffle_{\sigma} \eta
:=\sum_{\tau\in\Sh_{p,q}}T^\sigma_\tau(\om\ot\eta)
\]

\end{defi}

The quantum symmetrizer map can also be defined as
\[
QS_\sigma(v_1\ot\cdots\ot v_n)=
v_1\shuffle_\sigma  \cdots \shuffle_\sigma v_n
\]
With this notation, next result reads as follows:
\begin{teo}\label{teoid}
 The  $A'$-$A$-linear  quantum symmetrizer map
\[
 \xymatrix{
 A'V^{\ot n}A\ar[r]^{\wt \id}& A\ot A^{\ot n}\ot A\\
 a_1'e_{x_1}\cdots e_{x_n}a_2\ar@{|->}[r]&
 a_1\ot(x_1\shuffle_{-\sigma}  \cdots \shuffle_{-\sigma} x_n)\ot a_2}
\]
is a chain map lifting the identify. Moreover, 
$\wt\id:B\to (A\ot TA\ot A,b')$ is a differential graded algebra
map, where in $TA$ the product is $\shuffle_{-\sigma}$, and
in $A\ot TA\ot A$ the multiplicative structure
 is not the usual tensor product algebra, but the braided one.
 In particular, this map factors through
 $A\ot \B\ot A$, where $\B$ is the Nichols algebra
 associated to the braiding $\sigma'(x\ot y)= - z\ot t$, where
 $x,y\in X$ and $\sigma(x,y)=(z,t)$. 
\end{teo}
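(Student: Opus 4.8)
The plan is to treat both sides as differential graded algebras and to pin down $\wt\id$ as the unique DGA map with prescribed values on generators, so that the chain-map property reduces to a check in degrees $0$ and $1$. First I recall from Proposition \ref{fnormal} that $B_n\cong A'\ot V^{\ot n}\ot A$ with $V=\oplus_{x\in X}ke_x$, and that under $A'\cong A$ each $B_n$ is free as an $A$-bimodule; the same holds for the (braided) bar object $A\ot (TA)_n\ot A$. Hence a comparison map lifting $\id_A$ exists for general reasons, and the content is that the specific quantum symmetrizer realizes one such lift while being multiplicative. The key algebraic observation is that, since $\shuffle_{-\sigma}$ is an associative product on $TA$ (recalled above), the assignment $e_x\mapsto x$ extends uniquely to an algebra map $TE\to (TA,\shuffle_{-\sigma})$, and by associativity this map is exactly $e_{x_1}\cdots e_{x_n}\mapsto x_1\shuffle_{-\sigma}\cdots\shuffle_{-\sigma}x_n=QS_{-\sigma}(x_1\ot\cdots\ot x_n)$. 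Thus the middle-factor component of $\wt\id$ is intrinsically an algebra homomorphism.

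Next I would equip $A\ot TA\ot A$ with its braided algebra structure, in which the $A$-factors are moved past the tensor factors of $TA$ by the braiding $\sigma$ together with the Koszul sign rule coming from $|e_x|=1$; concretely this is the structure for which the outer $A\ot(-)\ot A$ functor and $\shuffle_{-\sigma}$ are compatible. With this structure I check that $\wt\id$ respects the two mixed families of relations defining $B$, namely $xe_y\sim e_zt$ and $e_xy'\sim z'e_t$ with $\sigma(x,y)=(z,t)$: this is precisely the point where the braiding must be applied to slide a degree-$0$ generator across a degree-$1$ one, and where the Koszul sign forces the relevant braiding to be $-\sigma$ rather than $\sigma$. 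Verifying compatibility with these relations (together with the purely degree-$0$ relations, which are immediate) shows that $\wt\id$ is a well-defined homomorphism of graded algebras.

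To obtain the chain-map statement I would use that $(A\ot TA\ot A,b')$ is itself a differential graded algebra: the bar differential $b'$ is a (braided) derivation for the quantum shuffle product, which is the braided generalization of the classical fact that the shuffle product makes the bar complex a DGA. Granting this, both $\wt\id\circ d$ and $b'\circ\wt\id$ are $\wt\id$-twisted derivations $B\to A\ot TA\ot A$ (because $d$ is a derivation on $B$, $b'$ a derivation on the target, and $\wt\id$ an algebra map), so they agree as soon as they agree on the algebra generators $x,x',e_x$. In degree $0$ this is the identity, and in degree $1$ one compares $\wt\id(d e_x)=\wt\id(x-x')$ with $b'(1\ot x\ot 1)=x\ot 1-1\ot x$; under the identification $A'\cong A$ these match (up to the fixed sign convention), giving that $\wt\id$ is a chain map lifting the identity, consistent with the comparison diagram and Theorem \ref{teocomplejo}.

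Finally, for the factorization I would invoke that the image of $QS_{-\sigma}$ inside $(TA,\shuffle_{-\sigma})$ is exactly the subalgebra generated by $V$, which is by definition the Nichols algebra $\B$ of the braided vector space $(V,-\sigma)$; equivalently $\ker QS_{-\sigma}$ is the Nichols ideal. Since the braiding $-\sigma$ is precisely $\sigma'(x\ot y)=-z\ot t$ with $\sigma(x,y)=(z,t)$, the middle component of $\wt\id$ lands in $\B$, so $\wt\id$ factors through $A\ot\B\ot A$. The main obstacle I expect is the braided bookkeeping of the second and third paragraphs: establishing the braided DGA structure on $A\ot TA\ot A$ (that $b'$ is a derivation for $\shuffle_{-\sigma}$) and tracking the Koszul signs so that the braiding governing both the relations of $B$ and the Nichols algebra comes out consistently as $-\sigma=\sigma'$; once the signs are aligned, the chain-map and factorization claims follow formally.
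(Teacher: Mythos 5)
Your overall architecture matches the paper's: equip $A\ot TA\ot A$ with a braided algebra structure, send $e_x\mapsto 1\ot x\ot 1$ and extend multiplicatively, reduce the chain-map property to a check on generators using the derivation property of the differentials on both sides, and identify the middle component with the quantum symmetrizer $QS_{-\sigma}$ to obtain the factorization through $A\ot\B\ot A$. However, there is a genuine gap at exactly the point you flag and then set aside with ``granting this'': the claim that $b'$ is a derivation for the braided shuffle product is \emph{not} a formal ``braided generalization of the classical fact''. The classical statement (Prop.\ 4.2.2 of \cite{L}) holds only for \emph{commutative} algebras, and its braided analogue holds only for \emph{braided-commutative} ones: when one computes $b'$ of a shuffle product, the mixed terms $\cdots\ot A_iB_j\ot\cdots$ and $\cdots\ot\sigma^1(A_i,B_j)\sigma^2(A_i,B_j)\ot\cdots$ occur with opposite signs, and they cancel precisely because $A_iB_j=\sigma^1(A_i,B_j)\sigma^2(A_i,B_j)$ holds in $A$. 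This identity, $m_A\circ\sigma=m_A$, is exactly the defining relation $xy=zt$ of $A=k\langle X\rangle/\langle xy-zt\rangle$; it is the one place where the specific algebra $A$ (rather than an arbitrary algebra carrying a braiding) enters the argument, and the paper isolates it as a separate lemma, together with the prior lemma that $\sigma$ extends from $X\times X$ to the monoid $M$ compatibly with multiplication --- an extension you also use implicitly when writing $\sigma_{T,A}$ and $\sigma_{A,T}$. Without these two facts your plan stalls: for a general braiding the derivation property of $b'$ is simply false, and then neither your ``twisted derivation'' argument nor the associativity of the braided product on $A\ot TA\ot A$ (the paper's Lemma \ref{AC monoid} plus Corollary \ref{btp}) is available.

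A smaller but real error in your sign bookkeeping: sliding a degree-$0$ generator past a degree-$1$ one carries Koszul sign $(-1)^{0\cdot 1}=+1$, so the outer braidings in $A\ot_\sigma(TA,\shuffle_{-\sigma})\ot_\sigma A$ are $+\sigma$, not $-\sigma$; this is what makes $\wt\id(x)\,\wt\id(e_y)=1\ot z\ot t=\wt\id(e_zt)$ come out without a stray sign, so your claim that checking the mixed relations ``forces the relevant braiding to be $-\sigma$'' is backwards. The sign $-\sigma$ enters only inside the middle factor, where two degree-$1$ elements are transposed, and that is what ties the image of the middle component to the Nichols algebra of $-\sigma$ as in the statement.
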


\begin{rem}The Nichols algebra
$\B$ is the quotient of $TV$ by the ideal generated by  (skew)primitives that
are not in $V$, so the  result above explains the good behavior
of the ideals $invo$, $idS$,
or in general the ideal generated by
elements
of the form
 $\om=\sum_{i=0}^{N-1}e_{x_i}e_{y_i}$ where $\sigma(x_i,y_i)=(x_{i+1},y_{i+1})$ and $\sigma^N(x_0,y_0)=(x_0,y_0)$.
 It would be interesting to know the properties of $A\ot\B\ot A$
 as a differential object, since it appears to be a candidate of
 Koszul-type resolution for the semigroup algebra $A$
 (or similarly the group algebra $k[G_X]$).
 \end{rem}

The rest of the paper is devoted to the proof of \ref{teoid}. Most of the Lemmas
are "folklore" but we include them for completeness. The interested reader can
look at \cite{Lebed2} and references therein.

\begin{lem}\label{AC monoid}
 Let $\sigma$ be a braid in the braided (sub)category that contains two associative algebras  $A$ and $C$, meaning there 
 exists
 bijective functions 
 \[
\sigma_A:A\ot A\to A\ot A,\
\sigma_C:C\ot C\to C\ot C,\
\sigma_{C,A}:C\ot A\to A\ot C\]
such that  
\[
\sigma_*(1,-)=(-,1)\hbox{ and } \sigma_*(-,1)=(1,-) \ \hbox{ for } *\in \{A,C;C,A\}           
\]
\[
          \sigma_{C,A}\circ (1\otimes m_A)=(m_A\otimes 1)(1\otimes \sigma_{C,A})(\sigma_{C,A}\otimes 1)
\]
 and 
\[\sigma_{C,A}\circ ( m_C\otimes 1)=(1\otimes m_C)(\sigma_{C,A}\otimes 1)(1\otimes \sigma_{C,A})\]
Diagrammatically
\[
\xymatrix{
C\ar[d]&A\ar[rd]^{\!\!\!m_A}&&A\ar[ld]\\
\ar[rrd]^{\!\!\!\!\!\!\! \sigma_{C,A}}&&A\ar[lld]&\\
A&&C&
}
\xymatrix{
\\
\\
&=^{[*]}&\\}
\xymatrix{
C\ar[rrd]^{\!\!\!\!\sigma_{C,A}}&&A\ar[lld]&A\ar[d]\\
A\ar[d]&&C\ar[rd]&A\ar[ld]\\
A\ar[rd]&&A\ar[ld]&C\ar[d]\\
&A&&C
}
\]
and
\[
\xymatrix{
C\ar[rd]^{\ \ m_C}&&C\ar[ld]&A\ar[d]\\
&C\ar[rrd]^{\!\!\!\!\!\!\!\sigma_{C,A}}&&A\ar[lld]\\
&A&&C
}
\xymatrix{
\\
\\
&=^{[**]}&\\}
\xymatrix{
C\ar[d]&C\ar[rrd]&&A\ar[lld]\\
C\ar[rd]&A\ar[ld]&&C\ar[d]\\
A\ar[d]&C\ar[rd]&&C\ar[ld]\\
A&&C&
}
\]
Assume that they
satisfy the braid equation with any combination of $\sigma_A,\sigma_C$ or $\sigma_{A,C}$.
Then, $A\ot_\sigma C=A\ot C$ with product defined by
 \[
(m_A\ot m_C)\circ(\id_A\ot \sigma_{C,A}\ot\id_C)\colon
(A\ot C)\ot (A\ot C)\to A\ot C
\]
is an associative algebra. In diagram:
\[
\xymatrix{
A\ar[d]&&C\ar[rd]^{\!\!\!\sigma}&A\ar[ld]&&C\ar[d]\\
A\ar[rd]^{\ \ m_A}&&A\ar[ld]&C\ar[rd]^{\ \ m_C}&&C\ar[ld]\\
&A&&&C&
}\]
 
\end{lem}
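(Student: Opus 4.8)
The plan is to check directly that the candidate product
\[
\mu := (m_A\ot m_C)\circ(\id_A\ot \sigma_{C,A}\ot\id_C)\colon (A\ot C)^{\ot 2}\to A\ot C
\]
is unital and associative, the entire argument being a diagram chase that uses only the two compatibility relations $[*]$ and $[**]$, the associativity of $m_A$ and $m_C$, and the unit conditions. First I would dispose of the unit: I claim $1_A\ot 1_C$ is a two-sided unit for $\mu$. This is immediate from the unit conditions $\sigma_{C,A}(1_C\ot a)=a\ot 1_C$ and $\sigma_{C,A}(c\ot 1_A)=1_A\ot c$ together with the unit axioms of $m_A$ and $m_C$: inserting $1_A\ot 1_C$ on either side, the braiding slides the unit across unchanged and the two multiplications then act as identities.

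For associativity I would compute the two iterated products $\mu\circ(\mu\ot\id)$ and $\mu\circ(\id\ot\mu)$ on a general element of $(A\ot C)^{\ot 3}$, thought of as six strands $a_1,c_1,a_2,c_2,a_3,c_3$ read left to right. On the left-associated side the inner $\mu$ braids $c_1$ past $a_2$ and multiplies, producing an $A$-strand $m_A(a_1,a_2')$ and a $C$-strand $m_C(c_1',c_2)$, where $\sigma_{C,A}(c_1\ot a_2)=a_2'\ot c_1'$; the outer $\mu$ must then braid the \emph{composite} $C$-strand $m_C(c_1',c_2)$ past $a_3$. Here relation $[**]$ is exactly what is needed: it rewrites $\sigma_{C,A}\circ(m_C\ot\id)$ as $(\id\ot m_C)(\sigma_{C,A}\ot\id)(\id\ot\sigma_{C,A})$, i.e. first braid $c_2$ past $a_3$, then braid $c_1'$ past the resulting $A$-strand, and only afterwards multiply in $C$. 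Symmetrically, on the right-associated side the inner $\mu$ braids $c_2$ past $a_3$, and the outer $\mu$ must braid $c_1$ past the \emph{composite} $A$-strand $m_A(a_2,a_3')$; now relation $[*]$ rewrites $\sigma_{C,A}\circ(\id\ot m_A)$ as $(m_A\ot\id)(\id\ot\sigma_{C,A})(\sigma_{C,A}\ot\id)$, i.e. first braid $c_1$ past $a_2$, then past the braided strand $a_3'$, then multiply in $A$.

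After applying these two hexagons and flattening the nested products by associativity of $m_A$ and $m_C$, both expressions reduce to the same canonical form: apply $\sigma_{C,A}$ to $(c_1,a_2)$, apply $\sigma_{C,A}$ to $(c_2,a_3)$, apply $\sigma_{C,A}$ to the resulting $(c_1',a_3')$, and finally multiply the three $A$-strands via $m_A$ and the three $C$-strands via $m_C$. Since the two sides yield precisely these three applications of $\sigma_{C,A}$ in the same nested order, they coincide and $\mu$ is associative. The main obstacle is purely the bookkeeping—tracking which strand is braided past which, and in what order—so I would render the three-dimensional braid diagrams explicitly, which makes the two reductions manifestly identical. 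I also note that for this particular identity the Yang--Baxter compatibility of $\sigma_{C,A}$ with itself is not invoked directly, the two hexagons already forcing agreement; the full braid-equation hypothesis is rather what guarantees that $(A\ot_\sigma C,\mu)$ is again an object of the braided category, so that the construction is coherent and may be iterated.
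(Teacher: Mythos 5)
Your proof is correct and takes essentially the same route as the paper's (very terse) proof: evaluate both iterated products $\mu\circ(\mu\ot\id)$ and $\mu\circ(\id\ot\mu)$ on a general element of $(A\ot C)^{\ot 3}$, rewrite the left-associated side with $[**]$ and the right-associated side with $[*]$, and identify the results using associativity of $m_A$ and $m_C$; your write-up simply supplies the strand-by-strand bookkeeping that the paper compresses into one sentence. Your closing observation is also accurate: associativity needs only the two hexagons, the unit conditions and associativity of $m_A,m_C$, while the braid-equation hypothesis serves to keep the construction inside the braided category (e.g.\ for iterating it, as the paper does with $A\ot TA\ot A$).
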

\begin{proof}


Take $m\circ (1\otimes m)((a_1\otimes c_2)\otimes ((a_2\otimes c_2)\otimes (a_3\otimes c_3))$
 use $[*]$, associativity in $A$, associativity in $C$ then $[**]$ and the result follows.
\end{proof}

\begin{lem}
Let $M$ be the  monoid freely generated by $X$
module the relation $xy=zt$ where $\sigma(x,y)=(z,t)$, then,
$\sigma:X\times X\to X\times X$ naturally extends to a braiding in  $M$ and verifies


\[
\xymatrix{
M\ar[rd]^{\ \ \ m}&   &M\ar[ld]&M\ar[d]^\id\\
   &M\ar[rrd]^\sigma&&M\ar[lld]\\
&M&&M
} 
\xymatrix{
\\
\\
&=&\\
} 
\xymatrix{
M\ar[d]^{\id}   &M\ar[rrd]^\sigma&&M\ar[lld]\\
  M\ar[rd]^{\!\!\sigma}&M\ar[ld]&&M\ar[d]\\
M\ar[d]&M\ar[rd]^{\ \ \ m}&&M\ar[ld]\\
M&&M} 
\]

\[
\xymatrix{
M\ar[d]&M\ar[rd]   &&M\ar[ld]_{m}\\
  M \ar[rrd]^\sigma&&M\ar[lld]\\
  M&&M
} 
\xymatrix{
\\
\\
&=&\\
} 
\xymatrix{
M\ar[rd]^{\!\!\sigma}   &M\ar[ld]&&M\ar[d]\\
  M\ar[d]&M\ar[rrd]^\sigma&&M\ar[ld]\\
M\ar[rd]^m&&M\ar[ld]&M\ar[d]^\id\\
&M&&M} 
\]
\end{lem}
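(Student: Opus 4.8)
The plan is to build the braiding on $M$ from the generating solution $\sigma$ on $X$ and then read off the two hexagon identities, with the only substantive point being that braiding commutes with the monoid relations—which will turn out to be literally the content of the YBeq. First I would extend $\sigma$ to the free monoid $X^{*}$ on $X$: for words $v\in X^{n}$ and $w\in X^{m}$ I set $\sigma_{X^{*}}(v\ot w):=T^{\sigma}_{c_{n,m}}(v\ot w)$, where $c_{n,m}\in\BB_{n+m}^{+}$ is the positive braid that moves the block of the last $m$ strands past the first $n$, acting via $\sigma$ as in the Remark on extending a braiding to $T(V)$. Because $\sigma$ satisfies the YBeq, this positive-braid action is well defined (independent of the reduced word chosen for $c_{n,m}$), and the block-braids obey $c_{n_1+n_2,m}=(c_{n_1,m}\ot\id)(\id\ot c_{n_2,m})$ together with its mirror identity. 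These two identities are exactly the two hexagon relations of the statement, now on $X^{*}$, so on the free monoid the required compatibilities with concatenation hold by construction.

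The heart of the argument is descent to the quotient $M=X^{*}/\langle xy=zt:\sigma(x,y)=(z,t)\rangle$. Since multiplication is already well defined on $M$, and the hexagons let me braid any block past any block one letter at a time, it suffices to check that $\sigma_{X^{*}}$ respects a single defining relation in each factor. Consider braiding a letter $x$ past a pair: using $\sigma(\id\ot m)=(m\ot\id)(\id\ot\sigma)(\sigma\ot\id)$ one finds $\sigma_{X^{*}}(x\ot y_1y_2)=pr\ot s$, where $(p,q)=\sigma(x,y_1)$ and $(r,s)=\sigma(q,y_2)$, and analogously $\sigma_{X^{*}}(x\ot z_1z_2)=p'r'\ot s'$ when $(z_1,z_2)=\sigma(y_1,y_2)$. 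Applying the YBeq to the triple $(x,y_1,y_2)$, the equality of third coordinates gives $s=s'$, while the equality of the first two coordinates gives $\sigma(p,r)=(p',r')$, which is precisely a defining relation of $M$, so $pr=p'r'$ in $M$; hence the two outputs coincide in $M\ot M$. The mirror computation, braiding a pair past a letter via $\sigma(m\ot\id)=(\id\ot m)(\sigma\ot\id)(\id\ot\sigma)$, reduces compatibility with a relation in the \emph{left} factor to the same YBeq. Therefore $\sigma_{X^{*}}$ descends to a well-defined map $\sigma\colon M\ot M\to M\ot M$ extending $\sigma$ on $X$.

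Finally, the two hexagon diagrams in the statement are inherited from $X^{*}$: they hold there by construction of the block-braids, and since the projection $X^{*}\to M$ is a monoid morphism intertwining $\sigma_{X^{*}}$ with $\sigma$, they descend verbatim to $M$. I would also record that $\sigma$ is genuinely a braiding on $M$, i.e.\ a solution of the YBeq on $M\ot M\ot M$, which follows from the YBeq on $X$ by the same one-letter-at-a-time reduction. The main obstacle is exactly the descent step, and the point worth emphasizing is that compatibility of the braiding with the monoid relations is not an additional hypothesis but the YBeq itself: the third coordinate produces the matching outgoing letter, and the first two coordinates produce a pair identified by the very relation that defines $M$. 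With these two hexagons in hand one is precisely in the situation of Lemma~\ref{AC monoid} (taking $A=C=M$), which is how this compatibility will be used downstream.
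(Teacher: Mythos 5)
Your proposal is correct and follows essentially the same route as the paper: extend $\sigma$ to the free monoid via the positive-braid (block-crossing) action, note the hexagons hold there by construction, and then verify descent to the quotient $M$ by checking a single defining relation in each factor, which reduces via the Yang--Baxter equation to the matching of the third output letter and the identification of the first two outputs by the very relation $pr=p'r'$ defining $M$. The paper's proof is just a terser version of this (the inductive reduction to $\sigma(axyb,c)=\sigma(aztb,c)$ plus a braid diagram), so your explicit single-letter computation is a faithful, more detailed rendering of the same argument.
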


\begin{proof}
 It is enough to prove that the extension mentioned before is well defined in the quotient. Inductively, it will be enough to
 see that
 $\sigma(axyb,c)=\sigma(aztb,c)$ and  $\sigma(c,axyb)=\sigma(c,aztb)$ where 
$ \sigma(x,y)=(z,t)$, and this follows
 immediately from the braid equation:

 A diagram for the first equation is the following: 
  \[
\xymatrix{
a\ar[d]&x\ar[rd]&y\ar[ld]&b\ar[rd]&c\ar[ld]\\
\ar[d]&z\ar[d]&t\ar[rd]&\ar[ld]&\ar[d]\\
\ar[d]&\ar[rd]&\ar[ld]&\ar[d]&\ar[d]\\
\ar[rd]&\ar[ld]&\ar[d]&\ar[d]&\ar[d]\\
&&\alpha&\beta&} 
\xymatrix{
\\
\\
&=&\\
} 
\xymatrix{
a\ar[d]&x\ar[d]&y\ar[d]&b\ar[rd]&c\ar[ld]\\
\ar[d]&\ar[d]&\ar[rd]&\ar[ld]&\ar[d]\\
\ar[d]&\ar[rd]&\ar[ld]&\ar[d]&\ar[d]\\
\ar[rd]&\ar[ld]&\ar[rd]&\ar[ld]&\ar[d]\\
&&\alpha^*&\beta^*&
 } 
\]
 
 As $\alpha\beta=\alpha^*\beta^*$ the result follows.

\end{proof}

\begin{lem}

$m\circ\sigma=m$, diagrammatically:
\[
\xymatrix{
M\ar[rrd]&&M\ar[lld]\\
M \ar[rd]^{\ \ \ m}  &&M\ar[ld]\\
&  M } 
\xymatrix{
\\
\\
&=&\\
} 
\xymatrix{
M\ar[rd]^{\ \ \ m}&&M\ar[ld]\\
&  M\ar[d]^\id\\
&  M
 } 
\]

\end{lem}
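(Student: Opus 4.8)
The plan is to prove $m\circ\sigma=m$ as maps $M\ot M\to M$ by induction on the total length $\ell(a)+\ell(b)$ of a pair $(a,b)$, using the two compatibility relations between $\sigma$ and $m$ established in the preceding Lemma together with associativity of $m$, and grounding the induction in the defining relations of $M$. The crucial structural fact is that the extended braiding preserves the length of each tensor factor: since it is assembled from the bijection $\sigma\colon X\times X\to X\times X$, which carries pairs of letters to pairs of letters, one has $\ell(\sigma^1(a,b))=\ell(a)$ and $\ell(\sigma^2(a,b))=\ell(b)$. This is exactly what makes the induction parameter strictly decrease in the recursive calls below.

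For the base of the induction, if either argument is the unit then $\sigma$ fixes it, say $\sigma(1\ot b)=b\ot 1$, so $m(\sigma(1\ot b))=b=m(1\ot b)$, and symmetrically on the right. If instead $a=x$ and $b=y$ are both single generators, write $\sigma(x,y)=(z,t)$; the very definition of $M$ imposes $xy=zt$, which reads $m(\sigma(x\ot y))=zt=xy=m(x\ot y)$. Thus the claim holds whenever $\ell(a),\ell(b)\le 1$.

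For the inductive step assume $\ell(a),\ell(b)\ge 1$ with $\ell(a)+\ell(b)\ge 3$, and suppose first that $\ell(a)\ge 2$; write $a=a_1a_2$ with $a_1,a_2$ nonempty. The first compatibility relation of the preceding Lemma, $\sigma\circ(m\ot\id)=(\id\ot m)\circ(\sigma\ot\id)\circ(\id\ot\sigma)$, gives after applying $m$ and using associativity $m\circ(\id\ot m)=m\circ(m\ot\id)$,
\[
m(\sigma(a_1a_2\ot b))=m\circ\bigl((m\circ\sigma)\ot\id\bigr)\circ(\id\ot\sigma)(a_1\ot a_2\ot b).
\]
Writing $\sigma(a_2\ot b)=\bar a_2\ot\bar b$ (a single term, with $\ell(\bar a_2)=\ell(a_2)$ and $\ell(\bar b)=\ell(b)$), the pair $(a_1,\bar a_2)$ has total length $\ell(a)<\ell(a)+\ell(b)$ because $\ell(b)\ge 1$, so the induction hypothesis yields $m(\sigma(a_1\ot\bar a_2))=a_1\bar a_2$ and the right-hand side becomes $m(a_1\bar a_2\ot\bar b)=a_1(\bar a_2\bar b)$. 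Finally $\bar a_2\bar b=m(\sigma(a_2\ot b))$, and since $(a_2,b)$ has total length $\ell(a_2)+\ell(b)<\ell(a)+\ell(b)$ a second application of the induction hypothesis gives $\bar a_2\bar b=a_2b$; hence $m(\sigma(a\ot b))=a_1(a_2b)=ab=m(a\ot b)$. The remaining case $\ell(b)\ge 2$ is entirely symmetric, splitting $b=b_1b_2$ and using the second compatibility relation $\sigma\circ(\id\ot m)=(m\ot\id)\circ(\id\ot\sigma)\circ(\sigma\ot\id)$.

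The only real obstacle is bookkeeping: one must verify that every appeal to the induction hypothesis is to a pair of strictly smaller total length, which rests entirely on the length-preservation of the extended braiding noted at the outset. Once that and the two hexagon relations are in hand, associativity of $m$ and the defining relation $xy=zt$ do all the work, and no genuine computation remains.
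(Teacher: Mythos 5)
Your proof is correct, but it takes a genuinely different route from the paper's. The paper argues directly from the explicit formula for the extension of $\sigma$ to words: the extended braiding is by definition a composite of elementary crossings $\sigma_i$, each of which replaces an adjacent pair $(x,y)$ by $\sigma(x,y)=(z,t)$, and since $xy=zt$ holds in $M$ each crossing satisfies $m\circ\sigma_i=m$; stripping the crossings off one at a time gives $m\circ\sigma=m$ in a single computation. You instead argue axiomatically: you never invoke the braid-lift formula, only the two hexagon relations of the preceding lemma, the unit axioms, associativity of $m$, and the defining relations $xy=zt$ as the base case, propagating the identity to all of $M$ by induction on total length. Your argument is longer but more robust: it shows that any braiding on a monoid that is compatible with $m$ in the hexagon sense and agrees with $m$ after multiplication on generators and units satisfies $m\circ\sigma=m$ globally, whereas the paper's computation is tied to the specific construction of the extension. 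One bookkeeping slip worth flagging: the extended braiding sends $M_p\times M_q$ to $M_q\times M_p$ (in the paper's notation, $\sigma(v\ot w)\in V^{\ot k}\ot V^{\ot n}$ for $v\in V^{\ot n}$, $w\in V^{\ot k}$), so with your notation $\sigma(a_2\ot b)=\bar a_2\ot\bar b$ one has $\ell(\bar a_2)=\ell(b)$ and $\ell(\bar b)=\ell(a_2)$, not the lengths you state; this is harmless, since the pair $(a_1,\bar a_2)$ then has total length $\ell(a_1)+\ell(b)$, still strictly smaller than $\ell(a)+\ell(b)$ because $\ell(a_2)\geq 1$, so every appeal to the induction hypothesis remains valid.
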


\begin{proof} Using successively that $m\circ \sigma_i=m$, we have: 
\[m\circ \sigma(x_1\dots x_n, y_1\dots y_k)=m\left((\sigma_k\dots \sigma_1)\dots(\sigma_{n+k-1}\dots \sigma_n)_{(x_1\dots x_ny_1\dots y_k)}\right)\]
\[=m\left((\sigma_{k-1}\dots \sigma_1)\dots(\sigma_{n+k-1}\dots \sigma_n)_{(x_1\dots x_ny_1\dots y_k)}\right)=\dots\newline\]
\[
=m(x_1\dots x_n,y_1\dots y_k) 
\]
\end{proof}

\begin{coro}
If one considers  $A=k[M]$,
then the algebra $A$ verifies all diagrams in previous lemmas.
 \end{coro}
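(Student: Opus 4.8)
The plan is to observe that all of the required identities have already been established at the level of the monoid $M$ in the three preceding lemmas, and to transport them to $A=k[M]$ purely by $k$-linearity. The guiding principle is that $M$ is a $k$-basis of $A=k[M]$, so that $M^{\times n}$ is a basis of $A^{\ot n}$; consequently any $k$-linear map out of such a tensor power is determined by its values on basis elements, and a diagram of $k$-linear maps commutes as soon as it commutes on the basis. Thus the entire corollary reduces to "check on the basis $M$, extend by linearity."

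First I would promote the braiding. The braiding $\sigma$ on $M$ built in the second lemma extends by $k$-bilinearity to a map $\sigma_A\colon A\ot A\to A\ot A$; since $M\times M$ is a basis of $A\ot A$ this extension is well defined, and it is bijective because $\sigma$ is. The braid equation for $\sigma_A$ then holds because it holds on basis elements (where it is exactly the braid equation for $\sigma$ on $M$) and both sides are $k$-linear. Next I would verify the unit axioms $\sigma_\ast(1,-)=(-,1)$ and $\sigma_\ast(-,1)=(1,-)$. These already hold on $M$: the unit of $M$ is the empty word, and the lift $T^\sigma_s$ acting on a tensor in which one factor has length zero involves no crossings, so $\sigma(1\ot w)=w\ot 1$ and $\sigma(w\ot 1)=1\ot w$ for every $w\in M$; the $k$-linear extension gives the unit axioms on $A$.

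The remaining conditions are precisely the diagrams already proved. The two compatibilities $[\ast]$ and $[\ast\ast]$ between the braiding and the multiplication are the two diagrams of the second lemma, and the relation $m\circ\sigma=m$ is the content of the third lemma; in each case the identity holds on products of basis elements of $M$, so its $k$-linear extension holds on all of $A$. Taking $A=C=k[M]$ with $\sigma_A=\sigma_C=\sigma_{C,A}$ all taken to be the braiding $\sigma_A$ just described, the hypotheses of Lemma \ref{AC monoid} are thereby met, and $A=k[M]$ satisfies every diagram of the previous lemmas.

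The closest thing to an obstacle is the legitimacy of the "check on generators, extend by linearity" argument itself, namely that the monoid algebra $k[M]$ is genuinely \emph{free} as a $k$-module on the set $M$, so that $M^{\times n}$ really is a $k$-basis of $A^{\ot n}$ and a $k$-linear map is determined by its restriction to it. This is standard for semigroup algebras and presents no real difficulty; once it is granted, every step above is automatic.
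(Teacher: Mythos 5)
Your proof is correct and coincides with what the paper intends: the paper states this corollary with no proof at all, regarding it as immediate, and your argument --- that $M$ is a $k$-basis of $k[M]$, so each diagram already verified on $M$ in the preceding lemmas (the compatibilities $[\ast]$, $[\ast\ast]$, the braid equation, the unit axioms, and $m\circ\sigma=m$) extends to $A$ by $k$-linearity --- is exactly the implicit justification. The extra care you take with well-definedness, bijectivity of the extended braiding, and freeness of $k[M]$ as a $k$-module fills in details the paper omits, but introduces nothing at odds with its approach.
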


\begin{lem}
If $T=(TA, \shuffle_\sigma)$ there are bijective functions 
\[
\sigma_{T,A}:=\sigma|_{T\ot A}: T\ot A\rightarrow A\otimes T                                                                                    
\]
\[
\sigma_{A,T}:=\sigma|_{A\ot T}: A\ot T\rightarrow T\otimes A  
\]
that verifies the hypothesis of Lemma \ref{AC monoid}, and the same for 
 $(TA, \shuffle_{-\sigma})$.
\end{lem}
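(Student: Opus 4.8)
The plan is to obtain $\sigma_{T,A}$ and $\sigma_{A,T}$ as the cross components of the canonical extension of $\sigma$ from $A$ to the tensor algebra $TA$, and then to verify the hypotheses of Lemma \ref{AC monoid} one at a time. By the preceding lemma, $\sigma$ is a well-defined braiding on $M$, and hence on $A=k[M]$, compatible with $m_A$; using the extension recalled in the Remark, for a word $w=a_1\ot\cdots\ot a_n\in A^{\ot n}\subset T$ and $a\in A$ I set
\[
\sigma_{T,A}(w\ot a)=(\sigma_1\circ\cdots\circ\sigma_n)(a_1\ot\cdots\ot a_n\ot a),
\]
so that the single factor $a$ is slid leftwards past each $a_i$ by $\sigma$, and dually for $\sigma_{A,T}$. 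Each $\sigma_i$ is invertible, so the composite is a bijection; and since sliding past the empty word involves no crossing while $\sigma$ fixes the unit of $A$, the unitality requirements $\sigma_*(1,-)=(-,1)$ and $\sigma_*(-,1)=(1,-)$ are immediate.

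The compatibility with $m_A$,
\[
\sigma_{T,A}\circ(1\ot m_A)=(m_A\ot 1)(1\ot\sigma_{T,A})(\sigma_{T,A}\ot 1),
\]
is precisely the hexagon already established for $\sigma$ on $M$ in the case $n=1$; for a general word one slides $a_1a_2$ (respectively $a_1$ then $a_2$) past the factors of $w$ one crossing at a time and applies the $n=1$ identity at each step, so this is a diagrammatic induction on the length of $w$. The braid equations relating $\sigma_A$, $\sigma_T$ and the cross terms are likewise consequences of the Yang-Baxter equation for $\sigma$: every such identity is a rearrangement of elementary crossings $\sigma_i$ governed by YBeq, which is exactly what the monoid lemma encodes.

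The delicate point, and the step I expect to be the main obstacle, is the compatibility with the shuffle multiplication $m_T=\shuffle_\sigma$,
\[
\sigma_{T,A}\circ(m_T\ot 1)=(1\ot m_T)(\sigma_{T,A}\ot 1)(1\ot\sigma_{T,A}).
\]
One must show that sliding the external factor $a$ past a quantum shuffle $w_1\shuffle_\sigma w_2$ agrees with sliding $a$ past each of $w_1,w_2$ and then shuffling. The transparent route is diagrammatic: $\shuffle_\sigma$ is by definition the sum over $s\in\Sh_{p,q}$ of the braid lifts $T_s^\sigma$, and the strand carrying $a$ can be pushed through each such braid diagram; the braid relations guarantee that the outcome is the corresponding sum of shuffled, braided terms. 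This is the assertion that $\shuffle_\sigma$ is a morphism in the braided category generated by $(A,\sigma)$, i.e.\ that $(TA,\shuffle_\sigma)$ is a braided bialgebra — a folklore fact for which I would appeal to the diagrammatic calculus (cf.\ \cite{Le}, \cite{Lebed2}). Alternatively one argues by induction using the recursive description of the quantum shuffle, peeling off the leading letter of $w_1$ or $w_2$ and invoking the $m_A$-hexagon at each step.

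For the variant $(TA,\shuffle_{-\sigma})$ the cleanest explanation is to pass to the super-braided category: one regards the tensor generators of $T$ as odd and the base copies of $A$ as even. The Koszul sign rule then makes the internal odd-odd braiding of $T$ equal to $-\sigma$, which is exactly what produces the product $\shuffle_{-\sigma}$, while the even-odd cross-braiding between the base $A$ and $T$ carries no Koszul sign and is still $\sigma|_{T\ot A}$. All the hexagon and braid-equation verifications above are then identities in this super-braided category, so they hold verbatim, and the same maps $\sigma_{T,A}$, $\sigma_{A,T}$ satisfy the hypotheses of Lemma \ref{AC monoid} for $\shuffle_{-\sigma}$ as well; this is consistent because $-\sigma$ again solves the Yang-Baxter equation, the three sign factors on each side combining to a single overall sign.
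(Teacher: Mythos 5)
Your proposal is correct, but there is no proof in the paper to compare it against: this lemma is one of the statements the authors explicitly label ``folklore'' (with a pointer to \cite{Lebed2}), and the text jumps straight from the statement to the corollary that $A\ot (TA,\shuffle_{-\sigma})\ot A$ is an algebra. So the comparison is between your argument and an omitted one. Your construction agrees with the paper's implicit definition — taking $k=1$ in the extension formula of the Remark gives exactly your $\sigma_{T,A}(w\ot a)=(\sigma_1\circ\cdots\circ\sigma_n)(a_1\ot\cdots\ot a_n\ot a)$ — and your treatment of bijectivity, unitality, and the $m_A$-hexagon (induction on the length of $w$, resting on the preceding lemma for $M$) is sound. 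The one place where you yourself appeal to folklore, the hexagon for $m_T=\shuffle_\sigma$, can be closed with a single braid-group identity: in $\BB_{p+q+1}$, conjugation by the cycle $\delta=\sigma_1\sigma_2\cdots\sigma_{p+q}$ shifts generators, i.e.\ $\delta\,\sigma_i=\sigma_{i+1}\,\delta$ for $1\le i\le p+q-1$. Since $\shuffle_\sigma$ on $A^{\ot p}\ot A^{\ot q}$ is $\sum_{s\in\Sh_{p,q}}T^\sigma_s$ and each $T^\sigma_s$ is a word in $\sigma_1,\dots,\sigma_{p+q-1}$, iterating this identity gives $\sigma_{T,A}\circ(T^\sigma_s\ot 1)=(1\ot T^\sigma_s)\circ\sigma_{T,A}$ (the right-hand word acting in shifted positions), and summing over $s$ is precisely the hexagon; here one uses that $\sigma$ satisfies the YBeq on $A=k[M]$, which the earlier lemmas provide. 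The same computation settles the $-\sigma$ case more directly than your super-grading picture: $T^{-\sigma}_s=(-1)^{\ell(s)}T^\sigma_s$, and the sign enters both sides of the displayed identity identically, so the \emph{unsigned} maps $\sigma_{T,A}$, $\sigma_{A,T}$ verify the hypotheses of Lemma \ref{AC monoid} for $\shuffle_{-\sigma}$ as well, which is exactly what the lemma asserts; your Koszul-sign formulation is a correct repackaging of this fact.
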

\begin{coro}
 $A\otimes (TA, \shuffle_{-\sigma})\otimes A$ is an algebra.
\end{coro}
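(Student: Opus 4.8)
The plan is to check, item by item, the hypotheses of Lemma~\ref{AC monoid} with $\sigma_A=\sigma$ the braiding on $A=k[M]$ furnished by the previous lemmas, $\sigma_C$ the extension of $\sigma$ to a braiding on $T=TA$ described in the Remark, and $\sigma_{T,A},\sigma_{A,T}$ the mixed braidings obtained by letting the single $A$-strand cross the tensor factors of $T$. Explicitly, for homogeneous $v=a_1\ot\cdots\ot a_n\in A^{\ot n}\subseteq T$ and $b\in A$ I would set
\[
\sigma_{T,A}=(\sigma\ot\id^{\ot(n-1)})\cdots(\id^{\ot(n-1)}\ot\sigma),
\]
moving $b$ leftwards past $a_n,\dots,a_1$, and symmetrically for $\sigma_{A,T}$. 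Since $\sigma\colon A\ot A\to A\ot A$ is well defined on the quotient (by the lemma extending $\sigma$ to $M$) and is bijective, each such composite is well defined and bijective. The unit conditions $\sigma_*(1,-)=(-,1)$ and $\sigma_*(-,1)=(1,-)$ reduce to the fact that the monoid unit braids trivially, $\sigma(1\ot a)=a\ot 1$ and $\sigma(a\ot 1)=1\ot a$.

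For the hexagon $[*]$, the compatibility of $\sigma_{T,A}$ with $m_A$, I would argue by induction on the number $n$ of tensor factors of $v$. The base case $n=1$ is exactly the $A$-$A$ hexagon established in the lemma preceding the identity $m\circ\sigma=m$, that is the diagram equating $\sigma\circ(\id\ot m)$ with $(m\ot\id)(\id\ot\sigma)(\sigma\ot\id)$. For the inductive step, crossing the product across the first strand $a_1$ is handled by that hexagon and across $a_2\ot\cdots\ot a_n$ by the inductive hypothesis, the strands rearranging by the braid relation for $\sigma$; this identifies $\sigma_{T,A}(v\ot bc)$ with the right-hand side of $[*]$.

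The main step, and the genuine obstacle, is the hexagon $[**]$, namely that $\sigma_{T,A}$ is compatible with the quantum shuffle product $m_T=\shuffle_\sigma$:
\[
\sigma_{T,A}\circ(\shuffle_\sigma\ot\id_A)=(\id_A\ot\shuffle_\sigma)(\sigma_{T,A}\ot\id_T)(\id_T\ot\sigma_{T,A}).
\]
Conceptually this holds because the extended braiding of the Remark makes $(TA,\shuffle_\sigma)$ an algebra object in the braided category generated by $\sigma$, so that $\sigma_{T,A}$ is the categorical braiding and is natural with respect to the product. To verify it by hand I would expand $\shuffle_\sigma$ as the sum $\sum_{s\in\Sh_{p,q}}T^\sigma_s$ over shuffle lifts in $\BB_{p+q}^+$ and compare, on each summand, the word obtained by first shuffling and then crossing the extra $b$-strand with the word obtained by first crossing $b$ past $v$ and $w$ separately and then shuffling; the two agree by the braid relations satisfied by the lifts $T^\sigma_s$ together with the strand-passing words. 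This bookkeeping of the braid-group action is where the difficulty lies; it is the standard fact underlying the quantum shuffle (Nichols) algebra being a braided bialgebra, and can alternatively be quoted from \cite{Le,Lebed2}.

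Finally I would verify the braid equations for every combination of $\sigma_A,\sigma_T,\sigma_{T,A},\sigma_{A,T}$. Each of these maps is assembled from the single braiding $\sigma$ on $A$ through the braid-group action, so the required Yang--Baxter relations on the relevant tensor strings are inherited from $\sigma$ satisfying the YBeq, precisely as in the lemma extending $\sigma$ to $M$. None of the preceding arguments uses the sign of $\sigma$, so replacing $\sigma$ by $-\sigma$ everywhere yields the analogous statement for $(TA,\shuffle_{-\sigma})$, completing the proof.
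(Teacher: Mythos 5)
Your route is the paper's route: verify the hypotheses of Lemma~\ref{AC monoid} for the pair $A$, $T=TA$ with the mixed braidings $\sigma_{T,A},\sigma_{A,T}$, and then invoke that lemma (the paper invokes it \emph{twice}, once for each outer copy of $A$; you should say this, since one application only produces the two-fold product $A\ot_\sigma T$, and the second application needs the crossing of $A$ past the algebra $A\ot_\sigma T$, assembled from $\sigma_{A,A}$ and $\sigma_{A,T}$ --- routine, but it is the statement actually being proved). Your verifications of the unit conditions, of hexagon $[*]$ by induction on tensor length, and of hexagon $[**]$ by comparing braid words term by term over the shuffle set are correct for $\shuffle_\sigma$, and they constitute exactly the content of the lemma that the paper states without proof.

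The genuine gap is your last sentence, and it matters because the corollary is precisely about $\shuffle_{-\sigma}$. It is not true that ``none of the preceding arguments uses the sign of $\sigma$'', and one cannot replace $\sigma$ by $-\sigma$ everywhere. If the mixed braiding is built from $-\sigma$, the unit condition already fails, since $(-\sigma)(1\ot a)=-(a\ot 1)\neq a\ot 1$; and hexagon $[*]$ fails too: its left-hand side contains \emph{one} crossing of a length-$n$ word of $T$ past an element of $A$ (sign $(-1)^n$), while its right-hand side contains \emph{two} such crossings (sign $(-1)^{2n}=1$), so the two sides differ by $(-1)^n$. Indeed, the product on $A\ot TA\ot A$ built from signed crossings is not even associative: bracketing $(a_1\ot c_1)(a_2\ot 1)\cdot(a_3\ot 1)$ the two ways produces results differing by the sign $(-1)^{|c_1|}$. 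The correct formulation --- and what the paper's preceding lemma asserts --- keeps all crossings $\sigma_A$, $\sigma_{T,A}$, $\sigma_{A,T}$ unsigned and changes only the internal multiplication of $T$ to $\shuffle_{-\sigma}$, which is associative because $-\sigma$ is again a solution of the YBeq. Then the only hypothesis of Lemma~\ref{AC monoid} involving $m_T$ is hexagon $[**]$, and it survives the sign change because your braid-word comparison holds separately for each shuffle lift $T^\sigma_s$, so inserting the coefficient $(-1)^{\ell(s)}$ in the corresponding summand on both sides preserves the equality. With that correction (and the explicit double application of the lemma) your argument closes.
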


\begin{proof}
 Use \ref{AC monoid} twice and the result follows.
\end{proof}

\begin{coro}\label{btp}
Taking $A=k[M]$, then the standard resolution of $A$ as $A$-bimodule has a natural algebra structure
 defining the braided  tensorial product as follows: 
\[
A\ot TA\ot A=
A\ot_\sigma(T^cA,\shuffle_{-\sigma})\ot_\sigma A\]
\end{coro}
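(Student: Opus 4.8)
The plan is to obtain the statement by specializing the previous corollary to $A=k[M]$ and then reading off the module structure; almost all of the work has already been carried out in the preceding lemmas, so what remains is to assemble them and to address the coherence of the iterated construction.

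First I would record the identification at the level of $k$-modules. By definition
\[
A\ot TA\ot A=\bigoplus_{n\geq 0}A\ot A^{\ot n}\ot A,
\]
and this is exactly the underlying graded $A$-bimodule of the standard (bar) resolution of $A$, each summand being free over $A^e$. Thus the content of the corollary is that this module carries a natural associative multiplication, and that it is the braided tensor product $A\ot_\sigma(T^cA,\shuffle_{-\sigma})\ot_\sigma A$.

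To produce the multiplication I would invoke the earlier corollary stating that $A=k[M]$ verifies all the diagrams of the monoid lemmas; in particular the extension of $\sigma$ to $M$ (hence to $A$) is compatible with $m_A$ through $[*]$, $[**]$ and satisfies $m_A\circ\sigma=m_A$. Together with the lemma producing the restricted braidings $\sigma_{T,A}$ and $\sigma_{A,T}$, this verifies the hypotheses of Lemma \ref{AC monoid} for the pairs $\big((T^cA,\shuffle_{-\sigma}),A\big)$ and $\big(A,(T^cA,\shuffle_{-\sigma})\ot_\sigma A\big)$. Applying Lemma \ref{AC monoid} twice then equips $A\ot_\sigma(T^cA,\shuffle_{-\sigma})\ot_\sigma A$ with an associative product, exactly as in the proof of the preceding corollary.

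The one point I expect to require genuine care is \emph{coherence}: to write $A\ot_\sigma(T^cA,\shuffle_{-\sigma})\ot_\sigma A$ unambiguously I must check that the two bracketings $(A\ot_\sigma C)\ot_\sigma A$ and $A\ot_\sigma(C\ot_\sigma A)$ yield the same associative algebra. This is precisely where the standing assumption of Lemma \ref{AC monoid}, that the braidings satisfy the braid equation in every combination of $\sigma_A$, $\sigma_{C,A}$ and $\sigma_{A,C}$, enters: combined with the associativity of $\shuffle_{-\sigma}$ on $T^cA$ it forces the two bracketings to agree. Granting this, the multiplication restricts on each $A\ot A^{\ot n}\ot A$ to the usual $A$-bimodule action, so it is the asserted natural algebra structure on the standard resolution.
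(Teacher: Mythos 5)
Your proposal is correct and takes essentially the same route as the paper: the paper obtains this corollary directly from the preceding one (whose proof is exactly "use Lemma \ref{AC monoid} twice", with the hypotheses supplied by the earlier lemmas on $A=k[M]$ and on $\sigma_{T,A}$, $\sigma_{A,T}$ for $(TA,\shuffle_{-\sigma})$), together with the identification of $\bigoplus_{n\geq 0}A\ot A^{\ot n}\ot A$ with $A\ot TA\ot A$. Your extra paragraph on coherence of the two bracketings is a point the paper leaves implicit, but it is subsumed in the standing braid-equation hypotheses of Lemma \ref{AC monoid}, so it does not constitute a different argument.
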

Recall the differential of the standard resolution
is defined as
$b':A^{\ot n+1}\to A^{\ot n}$
\[ b'(a_0\ot\dots\ot a_n)= \sum_{i=0}^{n-1}(-1)^{i}a_0\ot \dots\ot a_ia_{i+1}\ot\dots\ot a_n\]
for all  $n\geq 2$.
If $A$ is a commutative algebra then the Hochschild resolution
is an algebra viewed as $\oplus_{n\geq 2}A^{\ot n}=A\ot TA\ot A$, with right and left 
$A$-bilinear extension  of the shuffle product  on $TA$, and $b'$ is a (super) derivation with
 respect to that product (see for instance Prop. 4.2.2 \cite{L}). 
In the braided-commutative case we have the analogous result:
\begin{lem}
$b'$ is a derivation with respect to the product mentioned in Corollary \ref{btp}.
\end{lem}
\begin{proof}
Recall the commutative proof 
as in Prop. 4.2.2 \cite{L}. 
Denote $*$ the product
 \[
(a_0\ot\dots\ot a_{p+1} )*(b_0\ot\dots\ot b_{q+1})=
a_0b_0\ot((a_1\dots\ot a_{p} )\shuffle (b_1\ot\dots\ot b_{q}))\ot a_{p+1}b_{q+1}
\]
Since $\oplus_{n\geq 2}A^{\ot n}=A\ot TA\ot A$
is generated by $A\ot A$ and $1\ot TA\ot 1$, we check on generators.
For $a\ot b\in A\ot A$, $b'(a\ot b)=0$, in particular, it satisfies Leibnitz
rule for elements in $A\ot A$.
 Also, $b'$ is $A$-linear on the left, and right-linear on the right, so
\[
 b'\big((a_0\ot a_{n+1})*(1\ot a_1\ot \cdots \ot a_n\ot 1)\big)=
b'(a_0\ot a_1\ot\cdots\ot a_n\ot a_{n+1})
\]
\[=
a_0b'(1\ot a_1\ot\cdots\ot a_n\ot 1)a_{n+1}
=(a_0\ot a_{n+1})*b'(1\ot a_1\ot\cdots\ot a_n\ot 1)
\]
\[
=0+(a_0\ot a_{n+1})*b'(1\ot a_1\ot\cdots\ot a_n\ot 1)
 \]\[
=b'(a_0\ot a_{n+1})*(1\ot a_1\ot\cdots\ot a_n\ot 1)
+(a_0\ot a_{n+1})*b'(1\ot a_1\ot\cdots\ot a_n\ot 1)
\]
Now consider $(1\ot a_1\ot \dots\ot a_{p}\ot 1 )*(1\ot b_1\ot\dots\ot b_{q}\ot 1)$,
it is a sum
of terms where two consecutive tensor terms can be of the form
$(a_i,a_{i+1})$, or $(b_j,b_{j+1})$, or $(a_i,b_j)$ or $(b_j,a_i)$.
When one computes $b'$, multiplication of two consecutive tensor factors will give,
respectively,  terms  of the form
\[ \cdots\ot a_ia_{i+1}\ot \cdots, \  \cdots\ot b_jb_{j+1}\ot\cdots,\ \cdots\ot a_ib_j\ot\cdots,
\cdots  \ot b_ja_i\ot\cdots
\]
The first type of terms will recover $b'((1\ot a_1\ot\cdots\ot a_n\ot 1))*
(1\ot b_1\ot\cdots\ot b_q\ot 1)$ and the second type of terms will recover
$\pm (1\ot a_1\ot\cdots\ot a_n\ot 1)*b'((1\ot b_1\ot\cdots\ot b_q\ot 1))$.
On the other hand, the difference between the third and forth type of terms is just
a single trasposition so they have different signs, while $a_ib_j=b_ja_i$ because the 
algebra is commutative, if one take the {\em signed} shuffle then  they cancel each other.

In the {\em braided} shuffle product, the summands are indexed by the same set
of shuffles, so we have the same type of terms, that is, when computing $b'$ of 
a (signed) shuffle product, one may do the product of two elements in coming form the 
first factor, two elements of the second factor. 
or a mixed term. For the mixed terms, they will have the form
\[
\cdots\ot A_iB_j \ot\cdots \hbox{, or }
\cdots\ot \sigma^1(A_i,B_j)\sigma^2(A_i,B_j)\ot\cdots
    \]
As in the algebra $A$ we have  $A_iB_j=\sigma^1(A_i,B_j)\sigma^2(A_i,B_j)$ then this 
terms will cancel leaving only the terms corresponding to 
 $b'(1\ot a_1\ot\cdots\ot a_p \ot 1)\shuffle_{-\sigma} (1\ot b_1\ot\cdots\ot b_q\ot )$
and $\pm(1\ot a_1\ot\cdots\ot a_p\ot 1 )\shuffle_{-\sigma} b'(1\ot b_1\ot\cdots\ot b_q\ot 1)$
respectively.


\end{proof}

\begin{coro} There exists a  comparison morphism  
$f:(B,d)\to (A\ot TA\ot A,b')$ which is a differential graded algebra morphism, $f(d)=b'(f)$,
 simply defining it on  $e_x$ ($x\in X$)
 and verifying $f(x'-x)=b'(f(e_x))$.
\end{coro}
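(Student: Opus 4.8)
The plan is to produce $f$ directly as a morphism of \emph{algebras}, leveraging that the target is a bona fide differential graded algebra: by Corollary \ref{btp} its underlying algebra is the braided tensor product $A\ot_\sigma(T^cA,\shuffle_{-\sigma})\ot_\sigma A$, and by the preceding lemma $b'$ is a derivation for this product. Abstractly, both $(B,d)$ and $(A\ot TA\ot A,b')$ are resolutions of $A$ by free $A$-bimodules (for $B$ this is Proposition \ref{fnormal}, which gives $B_n\cong A\ot V^{\ot n}\ot A$ with $V=\oplus_x ke_x$), so a chain map lifting $\id_A$ exists by the comparison theorem; the real content of the corollary is that it can be chosen \emph{multiplicative}. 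Since $B$ is generated as an algebra by the symbols $x,x',e_x$, it suffices to prescribe $f$ on these generators, verify that the defining relations of $B$ go to identities, and verify the chain-map identity on generators.

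First I would set, in degree $0$,
\[
f(x)=1\ot x,\qquad f(x')=x\ot 1,
\]
regarding $A\ot A$ as the degree-$0$ piece $A\ot k\ot A$ of the target; this realizes the right- and left-$A$-module structures and is forced by asking $f$ to be the identity in degree $0$. In degree $1$ I would take $f(e_x)=1\ot x\ot 1\in A\ot A\ot A$. The point is that $f(x-x')$ lies in $\ker(m)=\operatorname{im}(b')$ by acyclicity of the bar resolution, and indeed $b'(1\ot x\ot 1)=x\ot 1-1\ot x$ is, up to the standard sign of the bar differential, exactly $f(d(e_x))$; this is the boundary condition $f(x'-x)=b'(f(e_x))$ of the statement. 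Any other degree-$1$ preimage would serve, which is the sense in which one is ``free'' to define $f$ on the $e_x$.

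Next I would check the relations of $B$. The purely degree-$0$ relations $xy\sim zt$, $x'y'\sim z't'$ and $xy'\sim z't$ all collapse to the single identity $xy=zt$ valid in $A$, because the braiding entering the product acts trivially between two degree-$0$ factors; thus e.g. $f(xy)=1\ot xy=1\ot zt=f(zt)$. The genuinely nontrivial relations are the mixed ones, $xe_y\sim e_zt$ and $e_xy'\sim z'e_t$: here one must compute products such as $f(x)f(e_y)$ and $f(e_z)f(t)$ \emph{inside} the braided algebra $A\ot_\sigma(T^cA,\shuffle_{-\sigma})\ot_\sigma A$, where sliding a degree-$0$ factor past the degree-$1$ middle factor invokes the braiding $\sigma_{C,A}$ of the earlier lemmas. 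The desired equalities then follow from $xy=zt$ in $A$ together with the compatibility of $\sigma_{C,A}$ with $m$ proved there; this braided-product bookkeeping, which is exactly what the Yang--Baxter hypothesis on $\sigma$ guarantees, is the step I expect to be the main obstacle.

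Finally, for the chain-map identity $f\circ d=b'\circ f$: on $x,x'$ both sides vanish, and on $e_x$ it is the boundary condition arranged above. Since $f$ is a degree-preserving algebra morphism while $d$ and $b'$ are both derivations (with the Koszul sign rule), the set $\{b\in B: f(d(b))=b'(f(b))\}$ is a subalgebra of $B$; containing the generators, it is all of $B$. Hence $f$ is a morphism of differential graded algebras. A concrete incarnation of this $f$, giving a closed formula in every degree, is the quantum-symmetrizer map of Theorem \ref{teoid}, which one may invoke to exhibit an explicit choice.
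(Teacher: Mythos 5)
Your skeleton is the same as the paper's: the paper also defines $f$ only on the $e_x$, extends it multiplicatively, and reduces the chain-map identity $f\circ d=b'\circ f$ to the single check $f(de_x)=b'(f(e_x))$ on generators --- your observation that $\{b\in B: f(d(b))=b'(f(b))\}$ is a subalgebra is exactly that reduction. The organizational difference is that the paper extends $f$ from $TV$ to $B$ $A'$-$A$-linearly through the normal form $B\cong A'\ot TV\ot A$ of Proposition \ref{fnormal}, so well-definedness as a bimodule map is automatic, whereas you define $f$ by generators and relations; in your variant, the verification that all five families of relations of $B$ become identities in $A\ot_\sigma(TA,\shuffle_{-\sigma})\ot_\sigma A$ is therefore the \emph{entire} content of the corollary, not a side remark.

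And that is where your write-up has a genuine gap: you defer the mixed relations $xe_y\sim e_zt$, $e_xy'\sim z'e_t$ as ``the main obstacle'' without computing them, and the one family you do justify is justified incorrectly. The relation $xy'\sim z't$ does \emph{not} ``collapse to $xy=zt$ in $A$ because the braiding acts trivially between two degree-$0$ factors'': in $f(x)f(y')=(1\ot 1\ot x)*(y\ot 1\ot 1)$ the letters $x$ and $y$ sit in the right-hand and left-hand copies of $A$ respectively and are never multiplied inside $A$ at all. If the right copy of $A$ were moved past the left copy by the trivial flip, this product would be $y\ot 1\ot x$, which in general differs from $f(z')f(t)=z\ot 1\ot t$; the relation is preserved only because the braided product of Corollary \ref{btp} (built from Lemma \ref{AC monoid}) transports $x$ past $y$ by $\sigma$ itself, $x\ot y\mapsto z\ot t$. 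Degree-$0$ kills Koszul signs, not the braiding. So this case requires exactly the same bookkeeping as the mixed relations, which does go through, e.g. $(1\ot 1\ot x)*(1\ot y\ot 1)=1\ot z\ot t=(1\ot z\ot 1)*(1\ot 1\ot t)$, matching $xe_y\sim e_zt$. Until those computations are written down, the existence of $f$ as an algebra morphism --- the only nontrivial assertion of the corollary --- has not been established. A smaller point: fix one sign convention; with $f(e_x)=1\ot x\ot 1$ one gets $f(de_x)=1\ot x-x\ot 1=-b'(f(e_x))$, which is precisely why the statement records the boundary condition as $f(x'-x)=b'(f(e_x))$ rather than $f(x-x')$.
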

\begin{proof}
Define $f$ on $e_x$, extend $k$-linearly to $V$, multiplicatively to $TV$, and $A'$-$A$ linearly to
$A'\ot TV\ot A=B$. In order to see that $f$ commutes with the differential, by $A'$-$A$-linearity
it suffices to check on $TV$, but since $f$ is multiplicative on $TV$ it is enough to check on $V$, and by $k$-linearity we check on basis, that is, we only need $f(de_x)=b'f(e_x)$.
\end{proof}

\begin{coro}
$f|_{TX}$ is the quantum symmetrizer map, and therefore 
$\Ker(f)\cap TX\subset B$ defines the  Nichol's ideal 
associated to $-\sigma$.
\end{coro}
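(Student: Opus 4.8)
The plan is to identify the abstractly constructed comparison morphism $f$ with the explicit quantum symmetrizer on the subalgebra $TX\subset B$ generated by the degree-one generators $e_x$ (the tensor algebra $TV$ on $V=\oplus_{x\in X}ke_x$), and then simply read off its kernel. First I would pin down $f$ on these generators. Since $f$ lifts the identity and is $A'$-$A$-linear, its value in degree one is determined by the single defining relation $f(x'-x)=b'f(e_x)$ from the preceding corollary. Comparing this with the bar differential, $b'(1\ot x\ot 1)=x\ot 1-1\ot x$, and with the degree-zero identification $f(x')=x\ot 1$, $f(x)=1\ot x$ coming from $B_0=A'A\cong A\ot A$, we see that $f(x'-x)=x\ot 1-1\ot x=b'(1\ot x\ot 1)$, which forces $f(e_x)=1\ot x\ot 1$. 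This is precisely the degree-one value of $\wt\id$ recorded in Theorem \ref{teoid}.

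Next, because $f$ is a morphism of differential graded \emph{algebras} and $TX$ is generated as an algebra by $V$ under concatenation, I would compute $f(e_{x_1}\cdots e_{x_n})$ as the product $f(e_{x_1})\cdots f(e_{x_n})$ formed in the target. By Corollary \ref{btp} that target product is the braided tensor product $A\ot_\sigma(T^cA,\shuffle_{-\sigma})\ot_\sigma A$. The crucial simplification is that each factor $1\ot x_i\ot 1$ has trivial outer $A$-components, so by the unit axioms $\sigma_*(1,-)=(-,1)$ and $\sigma_*(-,1)=(1,-)$ recorded in Lemma \ref{AC monoid}, every braiding appearing in the definition of $\ot_\sigma$ fixes those units. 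Consequently the product collapses onto the middle factor and yields $1\ot(x_1\shuffle_{-\sigma}\cdots\shuffle_{-\sigma}x_n)\ot 1$. Carefully unwinding the definition of $\ot_\sigma$ and confirming that the outer $1$'s are never braided into the $TA$-factor is the main technical obstacle, but it is entirely forced by the unit compatibilities.

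Finally I would invoke the paper's own identity $QS_{-\sigma}(x_1\ot\cdots\ot x_n)=x_1\shuffle_{-\sigma}\cdots\shuffle_{-\sigma}x_n$ to conclude, under the identification $e_{x}\leftrightarrow x$, that $f|_{TX}=QS_{-\sigma}$, the quantum symmetrizer attached to the braiding $\sigma'(e_x\ot e_y)=-e_z\ot e_t$ with $\sigma(x,y)=(z,t)$. For the statement on the Nichols ideal, I would use the standard description of the Nichols algebra as $\B=TV/\Ker(QS_{-\sigma})$, equivalently the image of the total quantum symmetrizer, which matches the skew-primitive description in the Remark following Theorem \ref{teoid}. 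Since $f|_{TX}=QS_{-\sigma}$, we obtain $\Ker(f)\cap TX=\Ker(QS_{-\sigma})$, which is exactly the Nichols ideal associated to $-\sigma$, completing the proof.
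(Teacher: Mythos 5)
Your proposal is correct and takes essentially the same approach as the paper, whose entire proof is the one-line computation $f(e_{x_1}\cdots e_{x_n})=f(e_{x_1})*\cdots *f(e_{x_n})=(1\ot x_1\ot 1)*\cdots *(1\ot x_n\ot 1)=1\ot(x_1\shuffle_{-\sigma}\cdots\shuffle_{-\sigma}x_n)\ot 1$, i.e.\ multiplicativity of $f$ plus the collapse of the braided product onto the middle shuffle factor. The extra steps you spell out (pinning down $f(e_x)=1\ot x\ot 1$ from the lifting condition, invoking the unit axioms for $\ot_\sigma$, and identifying the iterated shuffle with $QS_{-\sigma}$ to read off $\Ker(f)\cap TX$) are exactly the details the paper leaves implicit.
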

\begin{proof}
\[
 f(e_{x_1}\cdots e_{x_n})=f(e_{x_1})*\cdots *f(e_{x_n})=(1\ot x_1\ot 1)*\cdots *(1\ot x_n \ot 1)=1\ot(x_1\shuffle \cdots \shuffle x_n)\ot 1
\]

\end{proof}

The previous corollary explains why $\Ker(\id-\sigma)\subset B_2$
gives a Hopf ideal and also ends the proof of Theorem \ref{teoid}.

\begin{question}
$Im(f)=A\ot \mathfrak{B}\ot A$ is a resolution of $A$ as a $A$-bimodule? namely,
is $(A\ot\B\ot A,d)$ acyclic?
\end{question}
This is the case for involutive solutions in characteristic zero, but
 also for $\sigma=$flip in any characteristic, and $\sigma=\id$ (notice this $\id$-case 
gives the Koszul resolution for the tensor algebra). If the answer to that question is yes,
and $\B$ is  finite dimensional then $A$ have necessarily finite global dimension. 
Another interesting question is how to relate 
generators for the relations defining  $\B$ and cohomology classes for $X$.

\end{document}